\documentclass[10pt]{article}
\usepackage[all]{xy}
\usepackage{amsfonts,amsmath,oldgerm,amssymb,amscd}
\newcommand{\ra}{\rightarrow}   

\newcommand{\by}[1]{\stackrel{#1}{\ra}}

\newcommand{\surj}{\ra\!\!\!\ra}	

\newcommand{\ol}{\overline}		\newcommand{\wt}{\widetilde}
\newcommand{\iso}{\by \sim}

\newtheorem{theorem}{Theorem}[section]
\newtheorem{proposition}[theorem]{Proposition}
\newtheorem{lemma}[theorem]{Lemma}
\newtheorem{definition}[theorem]{Definition}
\newtheorem{corollary}[theorem]{Corollary}

\newcommand{\ga}{\alpha}		\newcommand{\gb}{\beta}

\newcommand{\gj}{\blacksquare}

	\newcommand{\BN}{\mbox{$\mathbb N$}}

\newcommand{\CM}{\mbox{$\mathcal M$}}

\newcommand{\op}{\mbox{$\oplus$}}

\newcommand{\Um}{\mbox{\rm Um}}		\newcommand{\SL}{\mbox{\rm SL}}
\newcommand{\GL}{\mbox{\rm GL}}		\newcommand{\ot}{\mbox{$\otimes$}}
\newcommand{\Aut}{\mbox{\rm Aut}}

\oddsidemargin .3 in			\evensidemargin .3 in
\marginparwidth 50pt 			\marginparsep 15pt
\headsep .2in				\textwidth 15.5cm
\textheight 20cm			\topmargin 0.5cm
\topskip 0.5cm

\brokenpenalty=10000	\clubpenalty=1000	\widowpenalty=1000
\hyphenpenalty=1000
    \flushbottom


\begin{document} 
\begin{center}
{\Large \bf Cancellation of projective modules over non-Noetherian rings
        }\\\vspace{.2in} {\large 
         Manoj K. Keshari}
\footnote{Department of Mathematics, IIT Bombay, Mumbai 400076, India;\;
        keshari@math.iitb.ac.in \\  {\it Key Words:} projective modules, cancellation problem \\
{\it Mathematics Subject Classification 2000:} Primary 13C10
}\end{center}


\begin{abstract}
$(i)$ Let $R$ be a ring of dimension $0$ and
  $A=R[Y_1,\ldots,Y_n,(f_1\ldots f_m)^{-1}]$, where $m\leq n$,
  $Y_1,\ldots,Y_n$ are variables over $R$ and $f_i\in R[Y_i]$. Then
  all projective $A$-modules are free and $E_r(A)$ acts transitively
  on $\Um_r(A)$ for $r\geq 3$.

$(ii)$ Let $R$ be a ring of dimension $d$ and $A$ be one of $R[Y]$ of
  $R[Y,Y^{-1}]$, where $Y$ is a variable over $R$. Let $P$ be a
  projective $A$-module of rank $\geq d+1$ satisfying property
  $\Omega(R)$ (see \ref{dd} for definition of property
  $\Omega(R)$). Then $E(A\op P)$ acts transitively on $\Um(A\op
  P)$. When $P$ is free, this result is due to Yengui: $A=R[Y]$ and
  Abedelfatah: $A=R[Y,Y^{-1}]$.
\end{abstract}

\section{Introduction}

{\it Rings are assumed to be commutative with unity and 
  modules are finitely generated. The dimension of a ring means
  its Krull dimension and projective modules are of constant
  rank.}

Let $R$ be a {\it Noetherian} ring of dimension $d$ and
$A=R[Y_1,\ldots,Y_n,(f_1\ldots f_m)^{-1}]$, where $m\leq n$,
$Y_1,\ldots,Y_n$ are variables over $R$ and $f_i\in R[Y_i]$. If $P$ is
a projective $A$-module of rank $\geq max\{2,d+1\}$, then
author-Dhorajia (\cite{AK}, Theorem 3.12) proved that $E(A\op P)$ acts
transitively on $\Um(A\op P)$. In particular $P$ is cancellative,
i.e. $P\op A^t \iso Q\op A^t$ for some projective $A$-module $Q$
$\implies P\iso Q$. The case $n=m=0$ of this result is due to Bass
\cite{Bass}, $n=1,m=0$ is due to Plumstead \cite{P},
$n=m=1$ and $f_1=Y_1$ is due to Mandal \cite{M1} (he proved that $P$
is cancellative), $m=0$ is due to Rao \cite{Ravi} (he proved
that $P$ is cancellative) and Laurent polynomial case $f_i=Y_i$ is
due to Lindel \cite{L95}.

Heitmann (\cite{H84}, Corollary 2.7) generalized Bass' result to all
commutative non-Noetherian rings.  It is natural to ask if analog of
above results hold for non-Noetherian rings.

Let $R$ be a ring of dimension $0$ and $A=R[Y_1,\ldots,Y_n]$ be a
polynomial ring in $n$ variables $Y_1,\ldots,Y_n$ over $R$. Then
Brewer-Costa \cite{BC} proved that all projective $A$-modules are
free, generalizing the well known Quillen-Suslin theorem \cite{Q,Su1}
(see Ellouz-Lombardi-Yengui \cite{ELY} for a constructive proof).
Abedelfatah \cite{AA1} generalized Brewer-Costa's result by proving
that $E_r(A)$ acts transitively on $\Um_r(A)$ for $r\geq 3$. We
generalize these results as follows (see \ref{zero},
\ref{elem}). This is non-Noetherian analog of author-Dhorajia's result
in case $d=0$.

\begin{theorem}
Let $R$ be a ring of dimension $0$ and $A=R[Y_1,\ldots,Y_n,(f_1\ldots
  f_m)^{-1}]$, where $m\leq n$, $Y_1,\ldots,Y_n$ are variables over
$R$ and $f_i\in R[Y_i]$. Then all projective $A$-modules are free and
$E_r(A)$ acts transitively on $\Um_r(A)$ for $r\geq 3$.
\end{theorem}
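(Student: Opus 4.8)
The plan is to reduce the general statement to the already-known polynomial case (Brewer--Costa \cite{BC} for freeness and Abedelfatah \cite{AA1} for transitivity of $E_r$ on $\Um_r$, $r\geq 3$) by the standard device of absorbing the inverted elements one variable at a time, while keeping the base ring zero-dimensional. Write $A=B[Y_m,\ldots,Y_n,(f_1\cdots f_m)^{-1}]$ where $B=R[Y_1,\ldots,Y_{m-1}]$ is not what we want; instead I would argue inductively on $m$, peeling off $(f_m)^{-1}$ first. Set $R'=R[Y_1,\ldots,Y_{m-1},(f_1\cdots f_{m-1})^{-1}][Y_{m+1},\ldots,Y_n]$ so that $A=R'[Y_m,f_m^{-1}]$ with $f_m\in R[Y_m]\subseteq R'[Y_m]$. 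The key structural point is that $R'[Y_m]/(f_m)$ — equivalently the relevant localization — behaves well; more precisely, one uses that for a ring $S$ and $f\in S[Y]$ monic-like (after the standard trick of making the leading coefficient a unit, or by working prime-by-prime), $S[Y,f^{-1}]$ is again of the form ``polynomial extension of something of dimension $\leq \dim S$'', and here $\dim R = 0$ forces everything in sight to have dimension $0$.

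The cleanest route: first handle the case $n=m$, i.e. $A=R[Y_1,f_1^{-1}]\otimes_R\cdots\otimes_R R[Y_m,f_m^{-1}]$ localized appropriately, by induction on $m$. For the base case $m=n=1$, $A=R[Y,f^{-1}]$ with $\dim R=0$: a zero-dimensional ring is a filtered colimit of Artinian rings (von Neumann regular case after killing nilpotents, or directly), and projective modules are finitely presented, so by a standard limit argument one reduces to $R$ Artinian, hence to $R$ a field, where $R[Y,f^{-1}]$ is a PID and the result is classical. For the inductive step, write $A=A'[Y_{m},f_m^{-1}]$ with $A'$ the analogous ring in the first $m-1$ blocks; by induction all projectives over $A'$ are free, but I cannot directly invoke the field case over $A'$. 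Instead I would invoke the Noetherian-free input differently: use that $\dim A' \leq 0$? — no, $\dim A'$ can be positive. So the genuine tool must be a Quillen-patching / Lindel-style argument for $R[Y,f^{-1}]$ over an arbitrary base, combined with the zero-dimensional hypothesis entering only at the level of the coefficient ring $R$.

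Thus the real plan is: (1) Reduce to $R$ reduced (nilpotents don't affect $\Spec$, $\mathrm{Pic}$, or $\Um$-orbits up to the needed extent, by a standard lifting-idempotents / Nakayama-type argument valid since modules are finitely presented). (2) With $R$ reduced of dimension $0$, $R$ is von Neumann regular, hence a filtered colimit of finite products of fields; since projective modules and unimodular rows are finitely presented, and $E_r$-action is a finitary condition, pass to the colimit to reduce to $R$ a finite product of fields, i.e. (by splitting idempotents) to $R$ a field $k$. (3) Over a field $k$, $A=k[Y_1,\ldots,Y_n,(f_1\cdots f_m)^{-1}]$ is a localization of the polynomial ring $k[Y_1,\ldots,Y_n]$; apply Quillen--Suslin (all projectives over the polynomial ring are free, and by Suslin's $K_1$-analogue / the known structure, $E_r$ acts transitively on $\Um_r$, $r\geq 3$, even $r\geq 2$ in the smooth affine case but we only need $r\geq 3$) and check that freeness and $E_r$-transitivity descend through the localization — freeness is immediate since a projective over the localization extends to a projective over a finitely generated sub-localization which one further localizes, and for unimodular rows one uses that $\Um_r(A)/E_r(A)$ is controlled by $\Um_r$ of the polynomial ring via a limit argument plus the fact that localizing can only merge orbits.

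The main obstacle I anticipate is step (2)–(3) bookkeeping: making the colimit argument rigorous for the $E_r$-transitivity statement (not just for ``projectives are free''), since one must check that a unimodular row over $A=R[Y,(f)^{-1}]$ and an elementary matrix witnessing its triviality over the colimit both descend to a finite stage, and that the finite stage ring still has the required form with a zero-dimensional base. This is routine but fiddly; the structural lemma that $\dim R=0$ is inherited by all the finite approximating subrings (so that the localization hypothesis $f_i\in R[Y_i]$ is preserved) is where I would spend the most care. Once that is in place, the field case is exactly Abedelfatah's theorem applied to a localization of a polynomial ring, and the reduction is complete.
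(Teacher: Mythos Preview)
Your proposed reduction to the field case via a filtered colimit does not go through: the assertion that a reduced zero-dimensional (equivalently, von Neumann regular) commutative ring is a filtered colimit of finite products of fields is false. Take $R=\prod_{n\geq 1}\mathbb{Q}$ and $a=(1,2,3,\ldots)\in R$. If $a$ lay in a subring $S\subset R$ that is a finite product of fields $K_1\times\cdots\times K_r$, the primitive idempotents of $S$ would give a finite partition $\mathbb{N}=N_1\sqcup\cdots\sqcup N_r$, and each $K_j\subset\prod_{n\in N_j}\mathbb{Q}$, being a field of characteristic zero whose coordinate projections to $\mathbb{Q}$ are nonzero ring maps, would be exactly the diagonal copy of $\mathbb{Q}$; hence $a$ would have to be constant on each block $N_j$, which is impossible. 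Equivalently, the finitely generated subring $\mathbb{Z}[a]\subset R$ is isomorphic to the polynomial ring $\mathbb{Z}[X]$ and has Krull dimension $2$. Thus the ``structural lemma'' you single out as needing the most care, that $\dim R=0$ is inherited by the approximating subrings, is not merely fiddly but false, and steps (2)--(3) collapse. Naive descent to finitely generated Noetherian subrings fails for the same reason: the known Noetherian result requires rank $\geq\dim R_0+1$, and $\dim R_0$ is uncontrolled.

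The paper's route is essentially the one you began to sketch in your first paragraph and then abandoned: induction on $m$. With $C=R[Y_1,\ldots,Y_n,(f_1\cdots f_{m-1})^{-1}]$ and $S=1+f_mR[Y_m]$, the crucial observation is that $B:=R[Y_m]_{f_mS}$ is again zero-dimensional (since $\dim R[Y_m]=1$), so the inductive hypothesis applies to $S^{-1}A=B[Y_1,\ldots,\widehat{Y_m},\ldots,Y_n,(f_1\cdots f_{m-1})^{-1}]$ and makes $S^{-1}P$ free. Choosing $g\in S$ with $P_g$ free and patching $P$ over $C_{f_m}$ with $(C_g)^r$ over $C_g$ across the fiber square for the comaximal pair $(f_m,g)$ yields a projective $C$-module $Q$ with $Q_{f_m}\cong P$; induction on $m$ finishes freeness. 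For $E_r$-transitivity the same fiber square is used, together with the splitting $E_r(C_{f_mg})=E_r(C_g)_{f_m}\cdot E_r(C_{f_m})_g$ valid for $r\geq 3$. Zero-dimensionality enters only through the stable, true fact $\dim R=0\Rightarrow\dim R[Y]_{f(1+fR[Y])}=0$, never through any descent to Noetherian or semisimple subrings.
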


Let $R$ be a ring of dimension $d$ and $n\geq d+2$. Then Yengui
\cite{Y11} proved that $E_n(R[Y])$ acts transitively on $\Um_n(R[Y])$
which is non-Noetherian analog of Plumstead's result in free
case. Abedelfatah \cite{AA} proved that $E_n(R[Y,Y^{-1}])$ acts
transitively on $\Um_n(R[Y,Y^{-1}])$ which is non-Noetherian analog of
Mandal's result in free case.  We generalize both results as
follows (\ref{K3}). See (\ref{dd}) for definition of property
$\Omega(R)$.

\begin{theorem}\label{ddd}
Let $R$ be a ring of dimension $d$ and $A$ be one of $R[Y]$ or
$R[Y,Y^{-1}]$, where $Y$ is a variable over $R$. If $P$ is a
projective $A$-module of rank $\geq d+1$ satisfying property
$\Omega(R)$, then $E(A\op P)$ acts transitively on $\Um(A\op P)$. In
particular $P$ is cancellative.
\end{theorem}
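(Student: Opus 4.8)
The plan is to reduce the statement to a statement about the base ring $R$ via a standard Quillen-type local-global patching, and then to run the Plumstead–Yengui / Mandal–Abedelfatah style induction on $\dim R$ using property $\Omega(R)$. First I would set $B=R[Y]$ (respectively $R[Y,Y^{-1}]$) and let $(a,p)\in\Um(B\op P)$ with $\operatorname{rank}P=r\geq d+1$. The goal is to find $\Phi\in E(B\op P)$ with $\Phi(a,p)=(1,0)$. The first move is the usual one: it suffices to make the first coordinate a unit, i.e. to transform $(a,p)$ by an elementary automorphism of $B\op P$ into some $(b,q)$ with $b$ a non-zerodivisor on $B$ modulo a suitable ideal, or better, with $b\equiv 1$. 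Since $P$ has a unimodular element locally (by rank $\geq d+1$ over each fibre $R_\mm[Y]$, invoking the free-case results of Yengui and Abedelfatah at the level of $\Um_{r+1}$), one patches: I would show $P\iso P'\op B$ after possibly adding the trivial summand is not needed — rather, I expect the excerpt's earlier material (the analog of author–Dhorajia's techniques, the transitivity results quoted for $E_n(R[Y])$ and $E_n(R[Y,Y^{-1}])$) to give directly that $E(B\op P)$ acts transitively once $\operatorname{rank}P$ is large enough relative to $\dim R$, provided one can "unwind" the action of $Y$.

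The key technical step is the descent in $Y$. I would use the classical strategy: consider the unimodular row/element over $B$ and look at its behaviour as $Y\to 0$ (for $R[Y]$) or via the two localizations $R[Y]_Y$ and $R[Y^{-1}]_{Y^{-1}}$ glued over $R[Y,Y^{-1}]$ (for the Laurent case). Concretely, set $(a(Y),p(Y))$; there is an automorphism over $R$ carrying $(a(0),p(0))$ to $(1,0)$ by the hypothesis that $P_0:=P/YP$ (or the relevant specialization) satisfies $\Omega(R)$ and has rank $\geq d+1$, so that $E(R\op P_0)$ is transitive on $\Um(R\op P_0)$ — this is where property $\Omega(R)$ is consumed, and it is exactly the inductive engine. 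Lifting that automorphism back to $B$ and composing, one reduces to the case $(a(Y),p(Y))\equiv(1,0)\bmod Y$, i.e. $a(Y)=1+Yg$ and $p(Y)\in YP$. Then one runs a Quillen-splitting / Suslin local–global argument over $R[Y]$: localizing at each maximal ideal of $R$ and applying the free-case result (Yengui, resp. Abedelfatah) fibrewise, the obstruction to globalizing the elementary trivialization lies in a group that is killed because the transformation is the identity mod $Y$. For $R[Y,Y^{-1}]$ one instead patches the two pieces and uses that both restrict compatibly near $Y=1$.

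I expect the main obstacle to be precisely the globalization: passing from "$(a,p)$ is elementarily trivializable after inverting each $s\in R\setminus\mm$" to "it is elementarily trivializable over $B$ itself", in the absence of Noetherian hypotheses. This is where one needs the non-Noetherian local–global principle for the elementary action on $\Um(B\op P)$ — an Ekedahl–Plumstead–Yengui style patching lemma — together with the hypothesis $\operatorname{rank}P\geq d+1$ to guarantee enough room for elementary generators, and property $\Omega(R)$ to control how unimodular elements of $R\op(P/YP)$ behave under extension and localization. A secondary difficulty is handling $P$ non-free: one cannot simply diagonalize, so the elementary automorphisms must be organized via the transvections $\Delta(p,\phi)$ and $\Delta^*(q,\psi)$ generating $E(B\op P)$, and one must check that the "dilation" argument (Suslin's $e_i\mapsto e_i + s^N(\ldots)$ trick) goes through with these transvections rather than matrix elementaries. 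Once these are in place, transitivity of $E(A\op P)$ on $\Um(A\op P)$ follows, and cancellativity of $P$ is the standard corollary (if $P\op A\iso Q\op A$, the image of $(0,1)$ is unimodular in $A\op P$, hence can be moved to $(1,0)$, splitting off $A$ and yielding $P\iso Q$).
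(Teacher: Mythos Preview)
Your plan diverges substantially from the paper's argument and, as written, has a genuine gap.

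The paper does \emph{not} descend in the variable $Y$ at all. Instead it runs induction on $\dim R$ using property $\Omega(R)$ directly in $R$: reduce to $R$ reduced; pick (from $\Omega(R)$ with $I=0$) a non-zerodivisor $s\in R$ with $P_s$ free, so Lindel's data (\ref{l95}) are available; form the ``double ring'' $B=A[X]/(X^2-s^2X)=R'[Y]$ or $R'[Y,Y^{-1}]$ with $R'=R[X]/(X^2-s^2X)$; apply Yengui/Abedelfatah (\ref{YA}) over $B$ in the \emph{free} case; feed this into Lemma~\ref{ak1} to trivialize anything in $\Um(A\op P,s^2A)$; finally, mod out by $s^2$, note $\dim R/s^2R<d$ and that $P/s^2P$ again satisfies $\Omega(R/s^2R)$ (this is exactly what the ``for every ideal $I$'' clause in Definition~\ref{dd} buys), and invoke the inductive hypothesis together with (\ref{lift}). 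There is no specialization $Y\mapsto 0$, no local-global in $\Spec R$, and no patching over $R[Y,Y^{-1}]$.

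Your proposal misreads how $\Omega(R)$ is consumed: it is not a statement about $P/YP$ or about behaviour at maximal ideals of $R$, but the guarantee that, for every ideal $I$ of $R$, $P/IP$ becomes free after inverting a single non-zerodivisor of $R/I$. This is precisely what makes Lindel's setup (\ref{l95}) and the double-ring Lemma~\ref{ak1} available at each stage of the induction on $\dim R$. Your fibrewise step also tacitly assumes that $P\ot_A R_{\mm}[Y]$ is free for each maximal $\mm$ of $R$, so that the free-case results apply locally; but for a non-Noetherian local ring this need not hold (cf.\ Remark~\ref{rl}), and $\Omega(R)$ does not provide it. Finally, the non-Noetherian Quillen-type local-global principle you identify as the ``main obstacle'' is indeed not available in the form you need for $E(A\op P)$ with $P$ non-free; the paper's route via (\ref{l95}), (\ref{ak1}) and the $A[X]/(X^2-s^2X)$ trick is exactly the device that circumvents this, converting the problem on $P$ into a free problem over $B$ where (\ref{YA}) applies.
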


We generalize (\ref{ddd}) for Pr\"ufer domain as follows (see
\ref{K4}): {\it Let $R$ be a Pr\"ufer domain of dimension $d$ and
  $A=R[Y,f^{-1}]$, where $Y$ is a variable over $R$ and $f\in
  R[Y]$. If $P$ is a projective $A$-module of rank $\geq d+1$, then
  $E(A\op P)$ acts transitively on $\Um(A\op P)$.}


\section{Preliminaries}

Let $A$ be a ring, $J$ an ideal of $A$ and $M$ an $A$-module. We say
that $m\in M$ is {\it unimodular} if there exist $\phi\in
M^*=Hom_A(M,A)$ such that $\phi(m)=1$. The set of unimodular
elements of $M$ is denoted by $\Um(M)$. We write
$\Um^1(A\op M,J)$ for the set of $(a,m)\in \Um(A\op M)$
such that $a\in 1+J$. We write $\Um(A\op M,J)$ for the set of $(a,m)\in
\Um^1(A\op M,J)$ such that $m \in JM$. We write $\Um_r(A,J)$ for
$\Um(A\op A^{r-1},J)$.

The group of $A$-automorphism of $M$ is denoted by $\Aut_A(M)$. 
We write $E^1(A\oplus M,J)$ for the subgroup of
$\Aut_A(A\oplus M)$ generated by automorphisms
$\Delta_{a\varphi}$ and $\Gamma_{m}$, where
$$\Delta_{a\varphi}=\left(
\begin{matrix}
 1 & a\varphi\\
0 & id_M
\end{matrix}  \right)
~~\mbox{and}~~ \Gamma_{m}=\left(\begin{matrix}
1&0\\
m&id_M 
\end{matrix}\right)
~~\mbox{with}~~ a\in J,\;\varphi \in M^*,\; m \in M.$$ 

We write
$E^1(A\op M)$ for $E^1(A\op M,A)$.  Let $E_{r+1}(A)$ denote the
subgroup of $\SL_{r+1}(A)$ generated by elementary matrices
$I+ae_{ij}$, where $a\in A$, $i\not= j$ and $e_{ij}$ is the matrix
with only non-zero entry $1$ at $(i,j)$-th place. We write
$E^1_{r+1}(A,J)$ for the subgroup of $E_{r+1}(A)$ generated by
$\Delta_{{\bf a}}$ and $\Gamma_{{\bf b}}$, where
$$\Delta_{{\bf a}}=\left(
\begin{matrix}
 1 & {\bf a}\\
0 & id_{F}
\end{matrix}  \right)
~~\mbox{and}~~ \Gamma_{{\bf b}}=\left(\begin{matrix}
1&0\\
{\bf b}^t & id_{F} 
\end{matrix}\right), 
~~\mbox{where}~~
F=A^r,\; {\bf a}\in JF,\; {\bf b} \in F.$$ 

Let $p\in M$ and $\varphi \in M^*$ be such
that $\varphi(m)=0$. Let $\varphi_p \in End(M)$ be defined as
$\varphi_p(q)=\varphi(q)p$. Then $1+\varphi_p$ is an
automorphism of $M$. The automorphism $1+\varphi_p$ of $M$
is called a {\it transvection} of $M$ if either $p\in \Um(M)$ or
$\varphi \in \Um(M^*)$. We write $E(M)$ for the subgroup of $\Aut(M)$
generated by transvections of $M$.

Due to following result of Bak-Basu-Rao (\cite{BBR}, theorem 3.10), we
can interchange $E(A\op P)$ and $E^1(A\op P)$.

\begin{theorem}
Let $A$ be a ring and $P$ a projective $A$-module of rank $\geq
2$. Then $E^1(A\op P)=E(A\op P)$.
\end{theorem}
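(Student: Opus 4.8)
The plan is to establish the two inclusions separately. The inclusion $E^1(A\oplus P)\subseteq E(A\oplus P)$ is a direct verification and needs nothing about the rank, whereas $E(A\oplus P)\subseteq E^1(A\oplus P)$ is the substantial half and is where $\operatorname{rank}P\geq 2$ enters.

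First I would record that $E^1(A\oplus P)=E^1(A\oplus P,A)$ is generated by the automorphisms $\Delta_\varphi$ (taking $a=1$ in $\Delta_{a\varphi}$, and using that the $\Delta$'s add in the upper-right corner) together with the $\Gamma_m$. Writing a functional on $A\oplus P$ as a pair $(f,\varphi)$ with $f\in A$ and $\varphi\in P^*$, I then exhibit each generator as a transvection $1+\Phi_u$ in the sense of the definition above. For $\Gamma_m$ take $u=(0,m)$ and $\Phi=(1,0)$: then $\Phi(u)=0$, the functional $\Phi$ is unimodular (it sends $(1,0)$ to $1$), and $1+\Phi_u$ sends $(b,q)$ to $(b,q+bm)$, i.e. $\Gamma_m$. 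For $\Delta_\varphi$ take $u=(1,0)$ and $\Phi=(0,\varphi)$: then $\Phi(u)=0$, now the element $u$ is unimodular, and $1+\Phi_u$ sends $(b,q)$ to $(b+\varphi(q),q)$, i.e. $\Delta_\varphi$. Thus both families of generators are transvections, giving $E^1(A\oplus P)\subseteq E(A\oplus P)$.

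For the reverse inclusion I must show that an arbitrary transvection $\tau=1+\Phi_u$ of $A\oplus P$, with $\Phi(u)=0$ and with $u$ unimodular or $\Phi$ unimodular, already lies in $E^1(A\oplus P)$. Writing $u=(a,p)$ and $\Phi=(f,\varphi)$, the automorphism $\tau$ has the block shape $\begin{pmatrix} 1+af & a\varphi \\ fp & 1+p\varphi \end{pmatrix}$ on $A\oplus P$, and the naive Gaussian elimination into a $\Gamma$, a diagonal factor, and a $\Delta$ is unavailable because the corner $1+af$ need not be invertible and the resulting Schur complement need not be trivial. The route I would take is reduction to the free case: choose a projective $P'$ with $P\oplus P'\cong A^n$, so that $A\oplus P$ is a direct summand of $F=A^{n+1}$, and extend $\tau$ to $\widehat\tau=\tau\oplus 1_{P'}$, a transvection of $F$ whose unimodular factor survives in $F$ or $F^*$. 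Since $\operatorname{rank}P\geq 2$ forces $n\geq 2$ and hence $n+1\geq 3$, I can invoke the classical fact that over a free module of rank $\geq 3$ a rank-one unipotent with a unimodular factor is a product of elementary matrices, so $\widehat\tau\in E_{n+1}(A)$.

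The main obstacle is the descent from $F$ back to $A\oplus P$: knowing $\widehat\tau\in E_{n+1}(A)$ does not by itself place $\tau$ in $E^1(A\oplus P)$, because a factorization of $\widehat\tau$ into elementary matrices of $F$ need not respect the summand $P'$; this is sharpened by the fact that, over a non-Noetherian ring, $A\oplus P$ cannot in general be re-identified with $A\oplus\ker\lambda$ coming from an arbitrary unimodular element (modules need not cancel), so one cannot simply transport the standard generators along a change of splitting. To overcome this I would work inside the relative elementary transvection group of the decomposition $F=(A\oplus P)\oplus P'$ and prove the factorization there directly: using the $\Gamma_m$'s to absorb the $P$-component $p$ of $u$ and the $\Delta_\varphi$'s to absorb $f$, exploiting the extra coordinates of $F$ to clear the obstructing entries before the corner normalization, and checking at each step that the elementary operation used lies in $E^1(A\oplus P)$ rather than merely in $E_{n+1}(A)$. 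The two cases, $u$ unimodular and $\Phi$ unimodular, are handled symmetrically. Verifying that every operation can be kept inside $E^1(A\oplus P)$ is the crux, and it is the only place where the projectivity of $P$ together with $\operatorname{rank}P\geq 2$ is genuinely used.
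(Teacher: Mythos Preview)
The paper does not prove this theorem at all: it is quoted as a result of Bak--Basu--Rao (\cite{BBR}, Theorem 3.10) and used as a black box, so there is no ``paper's own proof'' to compare your attempt with.

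Regarding your attempt itself: the inclusion $E^1(A\oplus P)\subseteq E(A\oplus P)$ is handled correctly; your identifications of $\Gamma_m$ and $\Delta_\varphi$ as transvections are exactly right and require no hypothesis on the rank.

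For the reverse inclusion, however, you have correctly located the difficulty but not overcome it. Passing to a free complement $P\oplus P'\cong A^n$ and writing $\widehat\tau\in E_{n+1}(A)$ is easy, but as you yourself note, an elementary factorization in $E_{n+1}(A)$ need not respect the summand $A\oplus P$, so this step gives nothing. Your proposed remedy---``work inside the relative elementary transvection group of the decomposition $F=(A\oplus P)\oplus P'$ and prove the factorization there directly''---is not an argument but a restatement of the goal: the whole question is \emph{whether} the needed operations can be kept inside $E^1(A\oplus P)$, and you have offered no mechanism (no commutator identity, no splitting trick, no induction) that would ensure this. In particular, the phrases ``exploiting the extra coordinates of $F$ to clear the obstructing entries'' and ``checking at each step that the elementary operation used lies in $E^1(A\oplus P)$'' are precisely the content that is missing. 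The actual proof in Bak--Basu--Rao proceeds quite differently, via a local--global principle for transvection groups (localizing at maximal ideals, where $P$ becomes free and the result is classical, and then patching), rather than by a direct matrix factorization of the kind you sketch.
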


The following result of Heitmann (\cite{H84}, Corollary 2.7)
generalizes Bass's cancellation \cite{Bass} to non-Noetherian rings.

\begin{theorem}\label{H}
Let $A$ be a ring of dimension $d$ and $P$ a projective $A$-module of
rank $\geq d+1$. Then $E(A\op P)$ acts transitively on $\Um(A\op
P)$. In particular $P$ is cancellative.
\end{theorem}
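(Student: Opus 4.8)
The plan is to run Bass's classical proof of the cancellation theorem, with each appeal to Noetherianness (finiteness of the set of minimal primes, prime avoidance, Serre's splitting theorem, the Bass stable range estimate) replaced by the corresponding result from Heitmann's non-Noetherian dimension theory; the whole point of that theory is that the Krull dimension of $\Spec A$ can still be used in general-position arguments when $A$ is arbitrary. One first reduces the cancellation statement $P\op A^t\iso Q\op A^t\implies P\iso Q$ to transitivity: by induction on $t$ it suffices to treat $t=1$, where an isomorphism $\theta\colon Q\op A\iso P\op A$ sends the unimodular element $(0,1)$ to some $(p,a)\in\Um(P\op A)$ and induces $Q\iso(Q\op A)/(0,1)A\iso(P\op A)/(p,a)A\iso(A\op P)/(a,p)A$; if $E(A\op P)$ is transitive on $\Um(A\op P)$ we may carry $(a,p)$ to $(1,0)$ by an automorphism, whence $(A\op P)/(a,p)A\iso(A\op P)/(1,0)A\iso P$ and $P\iso Q$. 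Moreover the generators $\Delta_{b\varphi}$ and $\Gamma_m$ of $E^1(A\op P)$ are transvections of $A\op P$ and so lie in $E(A\op P)$ (equivalently $E^1(A\op P)=E(A\op P)$, as quoted above), so it is enough to carry an arbitrary $(a,p)\in\Um(A\op P)$ to $(1,0)$ using such automorphisms.

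The heart of the matter is to first make the $P$-component unimodular. This is Bass's key lemma in the form: if $\mathrm{rank}\,P\ge d+1$ then for every $(a,p)\in\Um(A\op P)$ there is $m\in P$ with $p+am\in\Um(P)$. Its mechanism is a Serre-type splitting argument on the order ideal $O_P(p)=\{\varphi(p):\varphi\in P^*\}$: unimodularity of $(a,p)$ means $O_P(p)+aA=A$, so the closed set $V(O_P(p))\subseteq\Spec A$ is contained in the open set where $a$ is invertible, and since $\mathrm{rank}\,P\ge d+1>d\ge\dim V(O_P(p))$ there is enough room to correct $p$ by an element $m\in aP$ with $O_P(p+am)=A$. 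This is precisely the non-Noetherian Serre splitting / basic-element statement furnished by Heitmann's dimension theory, and applied to free $P$ it amounts to the non-Noetherian Bass bound $\mathrm{sr}(A)\le d+1$. Applying the transvection $\Gamma_m$ replaces $(a,p)$ by $(a,p')$ with $p'\in\Um(P)$.

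Once $p'\in\Um(P)$, choose $\psi\in P^*$ with $\psi(p')=1$; then $\Delta_{(1-a)\psi}$ carries $(a,p')$ to $(a+(1-a)\psi(p'),p')=(1,p')$, and $\Gamma_{-p'}$ carries $(1,p')$ to $(1,0)$, both being generators of $E^1(A\op P)$. This completes the reduction of $(a,p)$ to $(1,0)$, hence transitivity, hence the theorem.

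I expect the sole genuine obstacle to be the general-position step inside the key lemma — exactly the point where, over a non-Noetherian ring, one cannot argue by avoiding the finitely many minimal primes of $V(O_P(p))$, nor run the usual downward induction over the finitely many primes maximal in each dimension stratum. Heitmann's resolution is to replace that induction by one driven by a carefully constructed ideal whose vanishing locus captures the top-dimensional part of the relevant closed set and whose quotient strictly lowers dimension, so that the induction terminates while the unimodularity data being tracked is preserved; this is the content of the earlier results in Heitmann's paper, of which the cancellation statement is a corollary. Granting that input, the remainder is the routine bookkeeping of the classical Bass argument recalled above.
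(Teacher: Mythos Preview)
The paper does not prove this statement at all: Theorem~\ref{H} is quoted in the Preliminaries section as Heitmann's result (\cite{H84}, Corollary~2.7) and is used as a black box throughout. So there is no ``paper's own proof'' to compare your proposal against.

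That said, your outline is a faithful sketch of how the result is actually established: reduce cancellation to transitivity, use the identification $E^1(A\oplus P)=E(A\oplus P)$, and isolate the only nontrivial step as the non-Noetherian basic-element lemma (for $(a,p)\in\Um(A\oplus P)$ with $\mathrm{rank}\,P\ge d+1$, one can find $m\in P$ with $p+am\in\Um(P)$), after which two elementary transvections finish the job. You correctly flag that this basic-element step is exactly where Noetherianness would ordinarily enter and that Heitmann's dimension-theoretic machinery is what replaces it; indeed in \cite{H84} the cancellation theorem is literally a corollary of that machinery. Your write-up is therefore not a self-contained proof but an accurate road map pointing to the same external input the paper itself invokes.
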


The following result of Brewer-Costa \cite{BC} generalizes
Quillen-Suslin theorem \cite{Q,Su1} to all zero-dimensional rings.

\begin{theorem}\label{zero1}
Let $R$ be a ring of dimension $0$ and $A=R[Y_1,\ldots,Y_n]$ a
polynomial ring in $n$ variables $Y_1,\ldots,Y_n$ over $R$.  Then all
projective $A$-modules are free.
\end{theorem}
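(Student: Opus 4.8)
This is the theorem of Brewer and Costa \cite{BC}; were one to reprove it from scratch, the natural plan is to reduce it to the Quillen--Suslin theorem \cite{Q,Su1} over a field, while checking that no step secretly uses a Noetherian hypothesis. Recall that, by the conventions in force, every module here is finitely generated and every projective module has constant rank.

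The first move is to reduce to the case $R$ reduced. If $\mathfrak n$ denotes the nilradical of $R$, then a polynomial with nilpotent coefficients is nilpotent, so $\mathfrak n A$ is a nil ideal of $A=R[Y_1,\ldots,Y_n]$; hence $\mathfrak n A$ lies in the Jacobson radical of $A$ and $A/\mathfrak n A\cong R_{\mathrm{red}}[Y_1,\ldots,Y_n]$. Given a projective $A$-module $P$, I would pass to $\ol P=P/\mathfrak n A\,P$: if $\ol P$ is free of rank $r$ over $A/\mathfrak n A$, then lifting a basis produces a map $A^r\ra P$ that is onto by Nakayama, and since $P$ is projective it splits off $A^r$; the complementary summand vanishes modulo $\mathfrak n A$, hence is zero, so $P\cong A^r$. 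Thus it suffices to assume $R$ reduced, and a zero-dimensional reduced ring is von Neumann regular.

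So suppose $R$ is von Neumann regular. For each maximal ideal $\mathfrak m$ the ring $R_\mathfrak m$ is von Neumann regular and local, hence a field, so every projective module over $R_\mathfrak m[Y_1,\ldots,Y_n]$ is free by Quillen--Suslin and in particular extended from $R_\mathfrak m$. I would then apply Quillen's patching theorem --- valid for finitely presented modules (and projectives are finitely presented) over an arbitrary commutative base ring, in its several-variable form --- to conclude that every projective $A$-module $P$ is extended from $R$, say $P\cong P_0\otimes_R A$ with $P_0$ projective over $R$ of the same constant rank $r$. The remaining point is that $P_0$ is free, i.e.\ that a projective module of constant rank $r$ over a von Neumann regular ring is free: such a $P_0$ decomposes as $\bigoplus_{i=1}^k e_iR$ with $e_i$ idempotent, and the identity $eR\oplus fR\cong(e+f-ef)R\oplus(ef)R$ --- obtained by splitting $0\ra eR\cap fR\ra eR\oplus fR\ra eR+fR\ra 0$ and using $eR\cap fR=efR$, $eR+fR=(e+f-ef)R$ --- lets one sort the $e_i$ into a descending chain of idempotents $\varepsilon_1\geq\cdots\geq\varepsilon_k$; evaluating rank at every prime of the reduced ring $R$ then forces $\varepsilon_1=\cdots=\varepsilon_r=1$ and $\varepsilon_{r+1}=\cdots=\varepsilon_k=0$, so $P_0\cong R^r$ and $P\cong A^r$.

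The calculations are routine; the one thing that genuinely needs attention --- and the reason the Noetherian version is easier to cite verbatim --- is verifying that each ingredient is unconditional. Quillen's patching theorem and the passage through the variables $Y_1,\ldots,Y_n$ use only finite presentation of the module, not any finiteness of the ring; the decomposition of a finitely generated projective over a von Neumann regular ring into principal ideals is equally unconditional; and Quillen--Suslin is invoked solely over a field. Granting these, I do not foresee a serious obstacle.
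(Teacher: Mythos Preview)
The paper does not give a proof of this statement at all: it is quoted in the preliminaries as a result of Brewer--Costa \cite{BC} and simply cited thereafter. So there is no ``paper's own proof'' to compare your attempt against.

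That said, your sketch is sound and is essentially the Brewer--Costa argument. The reduction modulo the nilradical is correct (nil ideal $\subset$ Jacobson radical, Nakayama on the finitely generated complement); a reduced $0$-dimensional ring is von Neumann regular; its localizations at maximal ideals are fields, so Quillen--Suslin gives local freeness; multi-variable Quillen patching (which indeed only needs finite presentation of the module, not Noetherianness of the base) then shows $P$ is extended from $R$; and your idempotent-sorting trick correctly proves that a finitely generated projective of constant rank over a von Neumann regular ring is free. The only step one might ask you to justify more carefully is the decomposition of a finitely generated projective over a von Neumann regular ring into a finite direct sum of principal idempotent ideals, but this is classical and unconditional. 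Nothing here requires a Noetherian hypothesis, which is the whole point.
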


We will state five results which are proved with
assumption that rings are Noetherian. But the same proof works for
non-Noetherian rings.

\begin{lemma}\label{lift}
(\cite{AK1}, Remark 2.2)
Let $A$ be a ring, $I$ an ideal of $A$ and $P$ a projective
$A$-module. Then the natural map $E(A\op P) \ra E(\frac{A\op P}{I(A\op P)})$
is surjective.
\end{lemma}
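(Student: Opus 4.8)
The plan is to carry out the lifting not for transvections but for the explicit elementary generators $\Delta$ and $\Gamma$, reducing to that case by means of the Bak-Basu-Rao result quoted above. Set $N=\frac{A\op P}{I(A\op P)}$; there is a canonical isomorphism $N\cong (A/I)\op(P/IP)$, in which $P/IP=P\otimes_A A/I$ is a finitely generated projective $A/I$-module, so that $E(N)$ (formed over $A/I$) and its subgroup $E^1(N)$ are defined. Every $A$-automorphism of $A\op P$ preserves $I(A\op P)$, hence reduction modulo $I$ gives a group homomorphism $\Aut_A(A\op P)\ra\Aut_{A/I}(N)$ which visibly carries transvections to transvections (both orthogonality and unimodularity survive the reduction); so it restricts to a homomorphism $E(A\op P)\ra E(N)$, and it is this restriction that must be shown to be onto.

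I would then use two facts. First, by the Bak-Basu-Rao result applied to the projective $A/I$-module $P/IP$ we have $E(N)=E^1(N)$, so $E(N)$ is generated by the automorphisms $\Delta_{\bar a\bar\varphi}$ (for $\bar a\in A/I$ and $\bar\varphi\in (P/IP)^*$) and $\Gamma_{\bar m}$ (for $\bar m\in P/IP$) of $N$. Second, for arbitrary $a\in A$, $\varphi\in P^*$ and $m\in P$ the automorphisms $\Delta_{a\varphi}$ and $\Gamma_m$ of $A\op P$ are transvections (attached respectively to the unimodular element $(1,0)\in A\op P$ and to the first-coordinate projection $A\op P\ra A$), hence they lie in $E(A\op P)$, and reduction modulo $I$ sends them to $\Delta_{\bar a\bar\varphi}$ and $\Gamma_{\bar m}$. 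Granting these, the image of $E(A\op P)$ in $E(N)$ is a subgroup containing all the listed generators of $E(N)$ as soon as the three reduction maps $A\ra A/I$, $P\ra P/IP$ and $P^*\ra (P/IP)^*$ are surjective; that would finish the proof.

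The first two of these maps are trivially onto, so the only point requiring an idea is surjectivity of $P^*\ra (P/IP)^*$. Here I would invoke projectivity of $P$ directly: since $\Hom_A(P,-)$ is exact, the surjection $A\surj A/I$ induces a surjection $\Hom_A(P,A)\surj\Hom_A(P,A/I)$, and $\Hom_A(P,A/I)$ is canonically identified with $\Hom_{A/I}(P/IP,A/I)=(P/IP)^*$; so every $\bar\varphi\in (P/IP)^*$ is the reduction of some $\varphi\in P^*$, which is what was needed.

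The one place where anything substantive happens is the decision to pass to the $\Delta/\Gamma$ presentation at the very start, and it is forced on us: a direct attempt to lift a transvection $1+\bar\varphi_{\bar p}$ of $N$ by lifting $\bar\varphi$ and $\bar p$ separately only produces $\varphi(p)\in I$ rather than $\varphi(p)=0$, destroying the orthogonality a transvection requires, whereas the generators $\Delta_{a\varphi}$ and $\Gamma_m$ carry no such constraint and lift on the nose. (When $P$ has rank $0$ or $1$ the assertion is either vacuous or reduces to the classical lifting of $E_n$ along surjections of rings, so the use of the Bak-Basu-Rao result---which needs rank $\geq 2$---is not a genuine restriction here.)
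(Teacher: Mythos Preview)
The paper does not give its own proof of this lemma: it merely cites \cite{AK1}, Remark 2.2, with the blanket comment that the Noetherian hypothesis in that source is unnecessary. Your argument is correct and is exactly the standard one---reduce to the $\Delta/\Gamma$ generators via Bak--Basu--Rao on the quotient side, then lift those generators using surjectivity of $A\to A/I$, $P\to P/IP$, and (the only nontrivial point) $P^*\to(P/IP)^*$, the last coming from exactness of $\Hom_A(P,-)$.

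One small remark: your parenthetical handling of rank $1$ is not quite right. You say that case ``reduces to the classical lifting of $E_n$ along surjections of rings'', but a rank-$1$ projective $P$ need not be free, so $A\oplus P$ need not be $A^2$. This does not matter for the paper---every invocation of this lemma (in the proofs of Theorems \ref{K3} and \ref{K4}) occurs with $\operatorname{rank}P\geq 2$, so Bak--Basu--Rao applies directly---but if you wanted a self-contained statement covering rank $1$ you would need a separate (still easy) argument rather than an appeal to $E_n$.
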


\begin{lemma}\label{ak3.1}
(\cite{AK}, Lemma 3.1) Let $A$ be a ring, $J$ an ideal of $A$ and $P$
  a projective $A$-module. Let ``bar'' denote reduction modulo the
  nil-radical of $A$. Assume $E^1(\ol A\op \ol
  P,\ol J)$ acts transitively on $\Um^1(\ol A \op \ol P,\ol J)$. Then
  $E^1(A\op P,J)$ acts transitively on $\Um^1(A\op P,J)$.
\end{lemma}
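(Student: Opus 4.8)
The plan is to show that reduction modulo the nilradical $N$ of $A$ loses no information about $E^1$-orbits, and then to treat the resulting ``fibre'' by an explicit construction. Write $\ol A=A/N$, $\ol P=P/NP$, $\ol J=(J+N)/N$, and recall that $N$ lies in the Jacobson radical of $A$, so $1+N\subseteq A^{\times}$. First I would record two formalities. (a) $E^1(A\op P,J)$ preserves $\Um^1(A\op P,J)$: immediate, since $\Delta_{a\varphi}$ (with $a\in J$) changes a first coordinate in $1+J$ by an element of $J$, and $\Gamma_m$ leaves it fixed. (b) The maps $E^1(A\op P,J)\to E^1(\ol A\op\ol P,\ol J)$ and $\Um^1(A\op P,J)\to\Um^1(\ol A\op\ol P,\ol J)$ are surjective: for the first, $J\surj\ol J$, $P\surj\ol P$, and $P^{*}=\Hom_A(P,A)\surj\Hom_A(P,\ol A)=\ol P^{\,*}$ since $P$ is projective, so every generator $\Delta_{\ol a\ol\varphi}$, $\Gamma_{\ol m}$ lifts; for the second, lift $(\ol a,\ol m)$ to some $(a,m)$ with $a\in 1+J$ and lift a splitting functional $\ol\phi$ to $\phi$, noting $\phi(a,m)\in 1+N\subseteq A^{\times}$, so a rescaled $\phi$ splits $(a,m)$.

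Given $v,w\in\Um^1(A\op P,J)$, I would then apply the hypothesis to obtain $\ol\sigma\in E^1(\ol A\op\ol P,\ol J)$ with $\ol\sigma\,\ol v=\ol w$, lift $\ol\sigma$ to $\sigma\in E^1(A\op P,J)$, and replace $v$ by $\sigma v$. This reduces the theorem to the following assertion: if $v,w\in\Um^1(A\op P,J)$ agree modulo $N$, then they lie in one $E^1(A\op P,J)$-orbit.

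To prove that assertion, write $v=(a_1,p_1)$, $w=(a_0,p_0)$, $n=a_1-a_0$, $r=p_1-p_0$; then $n\in J\cap N$ ($n\in J$ because both $a_i\in 1+J$) and $r\in NP$. Only finitely many nilpotent elements occur in $n$ and $r$, so they generate a nilpotent ideal $I$ with $n\in I\cap J$ and $r\in IP$ (when $A$ is Noetherian one may simply take $I=N$). Filtering $I\supseteq I^{2}\supseteq\cdots$ and, at each stage, lifting a connecting automorphism just as above, I would reduce to the case $I^{2}=0$. There I would fix $c\in A$ and $\psi\in P^{*}$ with $ca_0+\psi(p_0)=1$, and produce a suitable product of generators $\Gamma_{(\cdot)}$, $\Delta_{(\cdot)}$ of $E^1(A\op P,J)$ carrying $w$ to $v$, using the relation $ca_0+\psi(p_0)=1$ to pay for the translation by $r$, the hypothesis $I^{2}=0$ to annihilate all cross-terms and higher-order corrections, and invertibility of elements of $1+I$ for normalisation.

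The substantive difficulty is this last construction. Two features make it delicate: the $P$-component $r$ of the difference need not be a multiple of $a_0$, so no single $\Gamma_m$ will do and one must interleave $\Gamma$'s and $\Delta$'s, exploiting the unimodular relation; and in $E^1(A\op P,J)$ the transvections $\Gamma_m$ range over all $m\in P$, whereas the $\Delta_{a\varphi}$ are constrained by $a\in J$, so the composite must be arranged so that the $A$-coordinate gets altered only by the admissible amount $n\in J$, the unconstrained $P$-coordinate absorbing the rest. The first two steps are formal; identifying this explicit elementary automorphism and verifying that it works is where the real content lies.
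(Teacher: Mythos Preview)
The paper does not actually prove this lemma: it is one of five results that are simply quoted from the literature (here \cite{AK}, Lemma~3.1) with the remark that the original Noetherian proof goes through verbatim. So there is no argument in the paper to compare against beyond that citation.

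Your outline is correct and carries no Noetherian hypothesis, which is the point. The formal steps (a) and (b) are fine; in (b) the surjectivity of $P^{*}\to\ol P^{\,*}$ is exactly where projectivity of $P$ is used. The reduction to ``$v,w$ agree modulo a nilpotent ideal'' is also fine.

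Where you make life harder than necessary is in insisting on connecting two \emph{arbitrary} $v,w$ with $\ol v=\ol w$. It is cleaner to aim for the single target $(1,0)$: by the hypothesis there is $\ol\sigma$ with $\ol\sigma\,\ol v=(1,0)$; lifting $\ol\sigma$ and replacing $v$ by $\sigma v$, you may assume $\ol v=(1,0)$. Now $v=(a,p)$ has $a\in 1+(J\cap N)$, hence $a$ is a \emph{unit}, and $p\in NP$. A single $\Gamma_{-a^{-1}p}$ takes $(a,p)$ to $(a,0)$; then, writing $b=a-1\in J\cap N$ and choosing finitely many $q_i\in P$, $\varphi_i\in P^{*}$ with $\sum_i\varphi_i(q_i)=1$ (possible because a projective module of constant rank $\ge 1$ has trace ideal equal to $A$), an iteration of the three-step move $\Gamma_{q_i}$, $\Delta_{-b\varphi_i}$, $\Gamma_{-(\text{new first coord})^{-1}\cdot(\text{second coord})}$ replaces $a$ by $a\prod_i(1-b\varphi_i(q_i))=1+b'$ with $b'\in b^{2}A\cap J$. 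Repeating finitely many times (since $b$ is nilpotent) yields $(1,0)$. All factors lie in $E^{1}(A\op P,J)$ because every $\Delta$ carries a scalar in $bA\subseteq J$. This bypasses the filtration $I\supseteq I^{2}\supseteq\cdots$ and the ``delicate interleaving'' you anticipate: those difficulties are artifacts of not having arranged the first coordinate to be a unit.

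One small caveat: the argument (yours and the one above) tacitly needs $\operatorname{rank}P\ge 1$, since for $P=0$ the group $E^{1}(A\op P,J)$ is trivial while $\Um^{1}(A\op P,J)=(1+J)\cap A^{\times}$ need not be a singleton even when it is modulo $N$. In every application in the paper this is satisfied.
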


\begin{lemma}\label{l95}
(\cite{L95}, Lemma 1.1)
Let $A$ be a reduced ring and $P$ an $A$-module. Assume $s\in A$ is a
non-zerodivisor such that $P_s$ is free of rank $r\geq 1$. Then there
exist $p_1,\ldots,p_r \in P$, $\phi_1,\ldots,\phi_r \in P^*$ and $t\in
\BN$ such that

(i) $s^tP \subset F$ and $s^tP^* \subset G$ with $F=\sum_1^r Ap_i$ and
$G=\sum_1^r A\phi_i$.

(ii) $(\phi_i(p_j))_{1\leq i,j\leq r} =$ diagonal $(s^t,\ldots,s^t)$.
\end{lemma}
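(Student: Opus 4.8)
The plan is to manufacture the $p_i$ and $\phi_i$ by clearing denominators in a fixed trivialization of $P_s$ and then to control those denominators \emph{uniformly}; the hypotheses that $P$ is finitely generated and that $s$ is a non-zerodivisor will play the role usually played by Noetherianity or finite presentation. Fix an $A_s$-isomorphism $P_s\iso A_s^{\,r}$, let $\bar e_1,\dots,\bar e_r\in P_s$ be the images of the standard basis and $\bar e_1^*,\dots,\bar e_r^*\in(P_s)^*$ the dual basis, and write $\ell\colon P\to P_s$ for the localization map. Two facts will be used repeatedly: $\ker\ell=\{x\in P: s^Nx=0\text{ for some }N\}$, and $A\to A_s$ is injective because $s$ is a non-zerodivisor.

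Since $P$ is finitely generated, for a common exponent $k$ each $s^k\bar e_i$ lies in $\ell(P)$, so I can choose $p_i\in P$ with $\ell(p_i)=s^k\bar e_i$. Likewise the $A$-linear map $\bar e_i^*\circ\ell\colon P\to A_s$ has finitely generated image, hence lands in $s^{-m}A$ for a common $m$; thus $\phi_i:=s^m(\bar e_i^*\circ\ell)$ takes values in $A$ and defines $\phi_i\in P^*$ with $(\phi_i)_s=s^m\bar e_i^*$. Evaluating in $A_s$ gives $\phi_i(p_j)/1=s^m\bar e_i^*(s^k\bar e_j)=s^{m+k}\delta_{ij}$, and by injectivity of $A\to A_s$ the relation $\phi_i(p_j)=s^{m+k}\delta_{ij}$ already holds in $A$.

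Next I would bound $P$ by $F_0:=\sum_1^r Ap_i$: for $x\in P$, expanding $\ell(x)$ in the basis $\{\bar e_i\}$ and clearing denominators produces $a_i\in A$ and $l\in\BN$ with $\ell(s^{l+k}x)=\ell(\sum a_ip_i)$, so $s^{l+k}x-\sum a_ip_i\in\ker\ell$ and hence $s^{N}(s^{l+k}x-\sum a_ip_i)=0$; thus some power of $s$ carries $x$ into $F_0$. Applying this to a finite generating set of $P$ and taking the largest exponent gives $T\in\BN$ (enlarge it so $T\ge m+k$) with $s^{T}P\subset F_0$. I would \emph{not} bound $P^*$ symmetrically, since $G_0:=\sum_1^r A\phi_i$ and $P^*$ need not be finitely generated here; instead, given $\psi\in P^*$ set $\psi':=s^{T}\psi-\sum_i s^{T-m-k}\psi(p_i)\phi_i\in P^*$. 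The relation of the previous paragraph gives $\psi'(p_j)=0$ for all $j$, so $\psi'$ annihilates $F_0\supset s^{T}P$; hence $s^{T}\psi'=0$ in $P^*$, so $\psi'=0$ since $s$ is a non-zerodivisor. Therefore $s^{T}\psi=\sum_i s^{T-m-k}\psi(p_i)\phi_i\in G_0$, i.e. $s^{T}P^*\subset G_0$.

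Finally, to force one exponent into both parts of the statement, replace $p_i$ by $s^{T}p_i$ and $\phi_i$ by $s^{T}\phi_i$ and put $t:=2T+m+k$. Then $F=\sum_1^r As^{T}p_i=s^{T}F_0$ and $G=s^{T}G_0$, so $s^{T}P\subset F_0$ and $s^{T}P^*\subset G_0$ yield $s^{t}P\subset F$ and $s^{t}P^*\subset G$ (using $t\ge 2T$), while $\phi_i(p_j)=s^{2T}\cdot s^{m+k}\delta_{ij}=s^{t}\delta_{ij}$ gives (ii). The one genuinely delicate point is extracting the uniform denominator exponents \emph{without} finite presentation of $P$ or finite generation of $P^*$: these must come solely from finite generation of $P$ together with cancellation of the non-zerodivisor $s$, after which the common-$s^{t}$ normalization is pure bookkeeping.
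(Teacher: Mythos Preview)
Your proof is correct. The paper does not actually supply a proof of this lemma: it merely cites Lindel \cite{L95}, Lemma~1.1, together with the blanket remark that ``the same proof works for non-Noetherian rings.'' So there is no in-paper argument to compare against; what you have written is a self-contained verification of that claim.

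Your argument follows the natural denominator-clearing strategy one would expect from Lindel's setup, with one genuinely nice point worth flagging. In the Noetherian (or projective) situation one typically bounds $P$ and $P^*$ symmetrically, using that both are finitely generated. You correctly observe that for a bare finitely generated module over a non-Noetherian ring this is not available for $P^*$, and your workaround --- showing that $\psi':=s^{T}\psi-\sum_i s^{T-m-k}\psi(p_i)\phi_i$ kills $F_0\supset s^{T}P$, hence $s^{T}\psi'=0$, hence $\psi'=0$ because $s$ is a non-zerodivisor on $A$ --- is exactly the right substitute. This is the only place where the non-Noetherian hypothesis requires care, and you handle it cleanly.

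Two cosmetic remarks. First, the phrase ``since $P$ is finitely generated'' in your second paragraph is attached to the wrong step: lifting $s^k\bar e_i$ into $\ell(P)$ uses only that there are finitely many $\bar e_i$; finite generation of $P$ is what makes the image of each $\bar e_i^*\circ\ell$ land in some $s^{-m}A$ and what gives the uniform $T$ in the third paragraph. Second, your final normalization (replacing $p_i,\phi_i$ by $s^{T}p_i,s^{T}\phi_i$ and setting $t=2T+m+k$) is correct but slightly wasteful; one could also absorb the discrepancy by multiplying only one of the two families by an extra power of $s$. Neither point affects validity.
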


\begin{lemma}\label{note}
(\cite{AK} Lemma 3.10)
Let $A$ be a reduced ring and $P$ a projective $A$-module of rank
$r$. Assume there exist a non-zerodivisor $s\in A$ such that $P_s$ is
free.  Choose $p_1,\ldots,p_r\in P$, $\varphi_1,\ldots,\varphi_r\in
P^*$ satisfying (\ref{l95}). Let $(a,p)\in \Um(A\op P,sA)$ with
$p=c_1p_1+\ldots +c_rp_r$, where $c_i\in sA$ for
all $i$. Assume there exist $\phi\in E^1_{r+1}(A,sA)$ such that
$\phi (a,c_1,\ldots,c_r)=(1,0,\ldots,0)$. Then there exist $\Phi\in
E(A\op P)$ such that $\Phi(a,p)=(1,0)$.
\end{lemma}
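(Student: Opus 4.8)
The natural plan is to lift $\phi$ to an automorphism of $A\op P$ lying in $E(A\op P)$ and apply it to $(a,p)$. Concretely, consider the $A$-linear map $\alpha\colon A\op A^{r}\to A\op P$ sending $(x,y_{1},\dots,y_{r})$ to $\bigl(x,\sum_{i}y_{i}p_{i}\bigr)$; then $\alpha(a,c_{1},\dots,c_{r})=(a,p)$ and $\alpha(1,0,\dots,0)=(1,0)$, so it suffices to produce $\Phi\in E^{1}(A\op P)$ agreeing with $\alpha\phi$ at $(a,c_{1},\dots,c_{r})$, i.e.\ with $\Phi\alpha(a,c_{1},\dots,c_{r})=\alpha\phi(a,c_{1},\dots,c_{r})=(1,0)$. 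As the rank of $A\op P$ is $r+1\ge 2$, the theorem of Bak--Basu--Rao gives $E^{1}(A\op P)=E(A\op P)$, so getting $\Phi$ in $E^{1}(A\op P)$ is enough. Throughout one uses the two consequences of (\ref{l95}): $\varphi_{i}(p_{j})=s^{t}\delta_{ij}$, and hence $\sum_{i}\varphi_{i}(q)p_{i}=s^{t}q$ for every $q\in P$ (the difference lies in $F$ and is killed by every $\varphi_{j}$, so vanishes since $s$ is a non-zerodivisor and $F=\sum Ap_{i}$ is free on the $p_{i}$).

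First I would factor $\phi$ into the generators $\Gamma_{\mathbf b}$ $(\mathbf b\in F=A^{r})$ and $\Delta_{\mathbf a}$ $(\mathbf a\in sF)$ of $E^{1}_{r+1}(A,sA)$ and lift them one at a time, tracking the images of $(a,p)$ and of $(a,c_{1},\dots,c_{r})$ in parallel. A generator $\Gamma_{\mathbf b}$, $\mathbf b=(b_{1},\dots,b_{r})$, lifts verbatim: the transvection $\Gamma_{m}$ of $A\op P$ with $m=\sum_{i}b_{i}p_{i}$ lies in $E^{1}(A\op P)$ (since $(1,0)\in\Um(A\op P)$) and satisfies $\alpha\Gamma_{\mathbf b}=\Gamma_{m}\alpha$. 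All the trouble is with a generator $\Delta_{\mathbf a}$, $\mathbf a=(a_{1},\dots,a_{r})\in sF$, which must add $\sum_{i}a_{i}c'_{i}$ to the first coordinate of the current pair $\bigl(a',\sum_{i}c'_{i}p_{i}\bigr)$ without touching the second. The naive lift, the transvection $1+\psi_{(1,0)}$ with $\psi=(0,\sum_{i}a_{i}\varphi_{i})$, adds instead $\sum_{i}a_{i}\varphi_{i}\bigl(\sum_{j}c'_{j}p_{j}\bigr)=s^{t}\sum_{i}a_{i}c'_{i}$: it realizes $\Delta_{s^{t}\mathbf a}$, not $\Delta_{\mathbf a}$.

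The hard part will be absorbing this spurious factor $s^{t}$, and this is exactly where $a\in 1+sA$ and $c_{i}\in sA$ are used. Since $a$ is a unit modulo $s^{t}A$ and $p=\sum_{i}c_{i}p_{i}\in sF$, one can first apply a transvection $\Gamma_{m}$ with $m\in F$ to replace $(a,p)$ by $(a,p^{\dagger})$ with $p^{\dagger}=\sum_{i}c^{\dagger}_{i}p_{i}\in s^{t}F$ (all $c^{\dagger}_{i}\in s^{t}A$), which replaces $\phi$ by $\phi\Gamma_{\mathbf b}^{-1}\in E^{1}_{r+1}(A,sA)$ still carrying $(a,c^{\dagger}_{1},\dots,c^{\dagger}_{r})$ to $(1,0,\dots,0)$. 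With this normalization one returns to the step-by-step lift, and the point to verify is that at each $\Delta$-step the element $\sum_{i}a_{i}c'_{i}$ that must be added to the first coordinate lies in the order ideal $\{\chi(\sum_{j}c'_{j}p_{j}):\chi\in P^{*}\}$, which contains $s^{t}\langle c'_{1},\dots,c'_{r}\rangle$ by the relations above; when it does, $\sum_{i}a_{i}c'_{i}=\chi\bigl(\sum_{j}c'_{j}p_{j}\bigr)$ for some $\chi\in P^{*}$, the transvection $1+\psi_{(1,0)}$ with $\psi=(0,\chi)$ executes the step exactly, and the two tracked pairs stay in lock-step. This divisibility bookkeeping — for which $\phi\in E^{1}_{r+1}(A,sA)$, rather than merely $\phi\in E_{r+1}(A)$, is essential, and which I expect to be the real content of the argument — is the main obstacle; an alternative for each $\Delta$-step is to use a short product $\Gamma_{m''}\,(1+\psi_{(1,0)})\,\Gamma_{m'''}$ that temporarily perturbs the second coordinate and restores it. Once every generator is lifted, composing the lifts gives $\Phi_{0}\in E^{1}(A\op P)$ with $\Phi_{0}(a,p^{\dagger})=\alpha\phi(a,c^{\dagger}_{1},\dots,c^{\dagger}_{r})=(1,0)$, and folding the initial normalization back in yields the required $\Phi\in E(A\op P)$ with $\Phi(a,p)=(1,0)$.
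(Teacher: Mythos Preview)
The paper gives no proof here; it cites \cite{AK}, Lemma~3.10, remarking only that the Noetherian hypothesis there plays no role. Your overall strategy---factor $\phi$ into its generators $\Gamma_{\bf b}$ and $\Delta_{\bf a}$ and lift each to $E^1(A\oplus P)$ while tracking the image of $(a,p)$---is the natural one and is the shape of the argument in \cite{AK}.

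There is, however, a genuine gap, and it is precisely the one you flag but do not close. Your proposed fix---first normalize so that the initial coordinates $c^\dagger_i$ lie in $s^tA$---does not survive the factorization. The very first $\Gamma_{\bf b}$ applied after normalization replaces $c^\dagger_i$ by $c^\dagger_i+b_ia$ with $b_i\in A$ arbitrary and $a\in 1+sA$; the $s^t$-divisibility you arranged is destroyed immediately. At the next $\Delta_{\bf a}$-step (${\bf a}\in sA^r$) the quantity $\sum_i a_ic'_i$ that must be added to the first coordinate lies only in $s\langle c'_1,\dots,c'_r\rangle$, and nothing forces it into the order ideal $O_P(p')$, which---as you yourself observe---is only known to contain $s^t\langle c'_1,\dots,c'_r\rangle$. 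Your alternative three-term product $\Gamma_{m''}\,(1+\psi_{(1,0)})\,\Gamma_{m'''}$ is asserted but not worked out; a direct calculation shows that restoring the $P$-coordinate exactly after such a perturbation forces a relation of the type $a'(m''+m''')+\chi(p'+a'm'')\,m'''=0$, which does not hold for free.

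What is missing is not finer bookkeeping along a \emph{fixed} word for $\phi$, but the freedom to replace $\phi$ by a more convenient word before lifting. The argument in \cite{AK} (in the tradition of Lindel \cite{L95}) uses the commutation relations among generators of $E^1_{r+1}(A,sA)$ to arrange that the $\Delta$-generators actually needing to be lifted have row vectors in a sufficiently high power $s^NA^r$ (with $N\ge t$); once $a_i\in s^tA$ the lift $\Delta_{\bf a}\mapsto\Delta_{\sum_i(a_i/s^t)\varphi_i}\in E^1(A\oplus P)$ is legitimate and acts exactly as $\Delta_{\bf a}$ on elements of $A\oplus F$. The hypothesis $\phi\in E^1_{r+1}(A,sA)$ rather than $E_{r+1}(A)$ is consumed in this rewriting step. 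Without some such device your generator-by-generator lift cannot be completed.
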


\begin{lemma}\label{w}
(\cite{W}, Lemma 4.2) Let $A$ be a reduced ring and $P$ an
  $A$-module. Assume there exist non-zerodivisors $s_1,\ldots,s_r \in
  A$, $p_1,\ldots,p_r \in P$ and $\phi_1,\ldots,\phi_r\in P^*$ such
  that $(\phi_i(p_j))_{r \times r}=$ diagonal $(s_1,\ldots,s_r):=N$.  Let
  $\CM$ be the subgroup of $\GL_r(A)$ consisting of all matrices of the form
  $I+TN^2$ for $T\in M_r(A)$. Then the map $$\Phi : \CM\ra \Aut_A(P);
  ~~~ \Phi(I+TN^2)=
  id_P+(p_1,\ldots,p_r)\,T\,N\,(\phi_1,\ldots,\phi_r)^t$$ is a group
  homomorphism.
\end{lemma}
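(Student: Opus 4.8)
The plan is to reduce the whole statement to one bilinear bookkeeping identity. For a matrix $S\in M_r(A)$ introduce the endomorphism $\theta(S):=(p_1,\ldots,p_r)\,S\,(\phi_1,\ldots,\phi_r)^t\in \mathrm{End}_A(P)$, so that explicitly $\theta(S)(q)=\sum_{i,j}S_{ij}\,\phi_j(q)\,p_i$ for $q\in P$. Then $S\mapsto\theta(S)$ is $A$-linear, and the asserted map is simply $\Phi(I+TN^2)=id_P+\theta(TN)$. First I would observe that $\Phi$ is well defined: since each $s_i$ is a non-zerodivisor so is each diagonal entry $s_i^2$ of $N^2$, whence $TN^2=T'N^2$ forces $T=T'$ entrywise; thus the parameter $T$ is uniquely determined by the group element $I+TN^2$. (This non-zerodivisor condition is all the proof uses; the ``reduced'' hypothesis is inherited from the ambient setting of Lemmas~\ref{l95} and \ref{note}.)

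Next I would verify that $\CM$ really is a subgroup of $\GL_r(A)$. Expanding $(I+T_1N^2)(I+T_2N^2)=I+(T_1+T_2+T_1N^2T_2)N^2$ shows closure under multiplication, and if $M=I+TN^2$ is invertible then from $M^{-1}+M^{-1}TN^2=I$ we get $M^{-1}=I+(-M^{-1}T)N^2$, again of the required form; together with $I=I+0\cdot N^2$ this gives the subgroup property.

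The heart of the argument is the multiplication rule
\[
\theta(S)\circ\theta(S')=\theta(S\,N\,S')\qquad(S,S'\in M_r(A)).
\]
This is precisely where the diagonal hypothesis $(\phi_i(p_j))=N$ enters: computing $\theta(S)(p_k)=\sum_{i,j}S_{ij}\phi_j(p_k)p_i=\sum_{i,j}S_{ij}N_{jk}p_i=\sum_i(SN)_{ik}p_i$ and feeding this into $\theta(S)\big(\theta(S')(q)\big)$ yields $\sum_{i,l}(SNS')_{il}\phi_l(q)p_i=\theta(SNS')(q)$. I expect the only real care here to be in tracking indices so that the matrix $N$ lands between $S$ and $S'$; this identity is the single nontrivial ingredient, and the main (mild) obstacle is organizational rather than conceptual—pinning down the index conventions in $\theta$ so the sandwiching reads correctly.

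Finally I would assemble these. Using $A$-linearity of $\theta$ together with the multiplication rule,
\[
\Phi(M_1)\Phi(M_2)=id_P+\theta(T_1N)+\theta(T_2N)+\theta(T_1N)\theta(T_2N)=id_P+\theta\big((T_1+T_2+T_1N^2T_2)N\big),
\]
since $\theta(T_1N)\theta(T_2N)=\theta(T_1N\cdot N\cdot T_2N)=\theta(T_1N^2T_2N)$. By the product formula of the second step the right-hand side equals $\Phi\big(I+(T_1+T_2+T_1N^2T_2)N^2\big)=\Phi(M_1M_2)$. Because $\Phi(I)=id_P+\theta(0)=id_P$, applying the identity just proved with $M_2=M^{-1}$ shows each $\Phi(M)$ has two-sided inverse $\Phi(M^{-1})$ and hence lies in $\Aut_A(P)$; therefore $\Phi\colon\CM\to\Aut_A(P)$ is a well-defined group homomorphism.
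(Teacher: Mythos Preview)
Your argument is correct. The key computation $\theta(S)\circ\theta(S')=\theta(SNS')$ is exactly the right engine, and your bookkeeping with the parameter $T$ (well-definedness via the non-zerodivisor property of $N^2$, closure and inverses in $\CM$, and the final assembly) is clean and complete.

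As for comparison: the paper does not supply its own proof of this lemma. It is one of the five results the author lists with the remark ``proved with assumption that rings are Noetherian, but the same proof works for non-Noetherian rings,'' and simply cites \cite{W}, Lemma~4.2. What you have written is precisely the direct verification one expects (and essentially what Weimers does): introduce the linear assignment $S\mapsto\theta(S)$, check the sandwich identity $\theta(S)\theta(S')=\theta(SNS')$ from $(\phi_i(p_j))=N$, and read off multiplicativity of $\Phi$. Your observation that only the non-zerodivisor hypothesis on the $s_i$ is used---not reducedness per se---is the content of the paper's one-line claim that the Noetherian assumption is irrelevant here, so your write-up in fact substantiates that claim.
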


The following result is from Lam's book (\cite{Lam}, Proposition
VI.1.14).

\begin{proposition}\label{split}
Let $B$ be a ring and $a,b \in B$ two comaximal elements. Then for any
$\sigma \in E_n(B_{ab})$ with $n\geq 3$, there exist $\ga \in
E_n(B_b)$ and $\gb\in E_n(B_a)$ such that $\sigma= (\ga)_a(\gb)_b$.
\end{proposition}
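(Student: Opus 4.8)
The plan is to reduce the statement to a homotopy version over the polynomial ring $B[T]$ and then invoke the dilation lemma for the elementary subgroup, following the pattern of Quillen's patching method.

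First I would suspend $\sigma$: writing $\sigma=\prod_\nu E_{i_\nu j_\nu}(c_\nu)$ with $c_\nu\in B_{ab}$, set $\sigma(T):=\prod_\nu E_{i_\nu j_\nu}(c_\nu T)\in E_n(B_{ab}[T])$, so that $\sigma(0)=I_n$ and $\sigma(1)=\sigma$. Since $a,b$ remain comaximal in $B[T]$ and $B_{ab}[T]=(B_a[T])_b=(B_b[T])_a$, it suffices to produce a factorization $\sigma(T)=\theta_1(T)\,\theta_2(T)$ in which $\theta_1(T)$ is the image of an element of $E_n(B_b[T])$ that is $I_n$ at $T=0$ and $\theta_2(T)$ is the image of an element of $E_n(B_a[T])$ that is $I_n$ at $T=0$; evaluating at $T=1$ then gives $\sigma=(\alpha)_a(\beta)_b$ with $\alpha\in E_n(B_b)$ and $\beta\in E_n(B_a)$.

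The engine is the dilation lemma for $E_n$, $n\geq 3$: given a ring $R$, an element $s\in R$, and $\gamma(Z)\in E_n(R_s[Z])$ with $\gamma(0)=I_n$, there are an integer $k\geq 0$ and $\Gamma(Z)\in E_n(R[Z])$ with $\Gamma(0)=I_n$ whose image in $E_n(R_s[Z])$ is $\gamma(s^kZ)$; one proves it by first writing $\gamma(Z)$ as a product of elementary matrices with entries in $ZR_s[Z]$ and then clearing denominators, the substitution $Z\mapsto s^kZ$ making every entry $s$-integral. With this I would run the two-variable form of the splitting. Put $\psi(X,Z):=\sigma(X+Z)\,\sigma(X)^{-1}\in E_n(B_{ab}[X,Z])$; then $\psi(X,0)=I_n$, so applying the dilation lemma in $Z$ over the base ring $B_b[X]$ yields $Q(X,Z)\in E_n(B_b[X][Z])$, $Q(X,0)=I_n$, with image $\psi(X,a^kZ)$ for some $k$, while applying it to $\sigma(Z')$ over the base ring $B_a$ yields $\Gamma(Z')\in E_n(B_a[Z'])$, $\Gamma(0)=I_n$, with image $\sigma(b^lZ')$ for some $l$. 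Choosing $\lambda,\mu\in B$ with $\lambda a^k+\mu b^l=1$ and using the identity $\psi(X,Z)\,\sigma(X)=\sigma(X+Z)$ at $X=\mu b^lT$, $Z=\lambda a^kT$, one gets $\sigma(T)=\psi(\mu b^lT,\lambda a^kT)\cdot\sigma(\mu b^lT)$; here the first factor is the image of $Q(\mu b^lT,\lambda T)\in E_n(B_b[T])$ and the second is the image of $\Gamma(\mu T)\in E_n(B_a[T])$, both $I_n$ at $T=0$, which is precisely the factorization sought.

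The main obstacle is the dilation lemma, and inside it the step of rewriting an element of $E_n(R_s[Z])$ that is trivial at $Z=0$ as a product of elementary matrices whose off-diagonal entries lie in $ZR_s[Z]$ — this is where $n\geq 3$ is genuinely used (the analogous statement fails for $n=2$), and it is carried out with the commutator identities $[E_{ij}(x),E_{jk}(y)]=E_{ik}(xy)$ for distinct $i,j,k$. Everything after that is bookkeeping with the comaximality relation $\lambda a^k+\mu b^l=1$. A more direct attempt — observing that the images of $E_n(B_a)$ and $E_n(B_b)$ generate $E_n(B_{ab})$ (each element of $B_{ab}$ splits as a sum of a fraction over $B_a$ and one over $B_b$ once $\lambda a^m+\mu b^m=1$) and then trying to commute one subgroup past the other using the $n\geq 3$ relations — breaks down because the induction on word length does not close; the polynomial/dilation device is what makes the argument go through.
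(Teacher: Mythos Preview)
The paper does not supply its own proof of this proposition; it simply quotes it from Lam's book (\cite{Lam}, Proposition~VI.1.14). Your argument is correct and is essentially the proof given there: suspend to a one-parameter family $\sigma(T)$ with $\sigma(0)=I_n$, invoke the dilation lemma for $E_n$ (valid for $n\geq 3$) over each localization, and then combine via a comaximality relation $\lambda a^k+\mu b^l=1$ using the two-variable cocycle $\psi(X,Z)=\sigma(X+Z)\sigma(X)^{-1}$. The step you flag as the crux---rewriting an element of $E_n(R_s[Z])$ trivial at $Z=0$ as a product of elementary generators with off-diagonal entries in $ZR_s[Z]$, via the commutator identities available only when $n\geq 3$---is indeed the heart of the matter, and your bookkeeping with the substitutions $X=\mu b^lT$, $Z=\lambda T$, $Z'=\mu T$ checks out.
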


We state Quillen-Suslin theorem \cite{Q,Su1}. Note that any
commutative ring is a filtered union of Noetherian commutative
rings. Hence following result will follow from Noetherian case.

\begin{theorem}\label{QS1}
Let $R$ be a ring and $P$ a projective $R[Y]$-module. Let $f\in R[Y]$
be a monic polynomial such that $P_f$ is free. Then $P$ is free.
\end{theorem}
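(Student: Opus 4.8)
The plan is to bootstrap from the Noetherian case of Quillen--Suslin by a direct-limit argument, exactly as the remark preceding the statement suggests. Write $R=\bigcup_{\lambda}R_{\lambda}$ for the filtered union of the finitely generated $\BZ$-subalgebras of $R$; by the Hilbert basis theorem each $R_{\lambda}$ is Noetherian, and $R[Y]=\bigcup_{\lambda}R_{\lambda}[Y]$.

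First I would descend all the data to a single finite stage. Since $P$ is finitely generated projective over $R[Y]$, write $P=e\,R[Y]^{N}$ for an idempotent $e\in M_{N}(R[Y])$; and since $P_{f}$ is free of rank $n:=\operatorname{rank}P$, fix matrices $A\in M_{N\times n}(R[Y]_{f})$ and $B\in M_{n\times N}(R[Y]_{f})$ with $eA=A$, $AB=e$ and $BA=I_{n}$ (so that $A,B$ identify $P_{f}$ with a free module of rank $n$). Only finitely many elements of $R$ occur among the entries of $e$ and of $f$ and among the numerators of the entries of $A,B$ (whose denominators are powers of $f$); choose $\lambda$ so that $R_{\lambda}$ contains all of them. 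Then the ``same'' matrix $e_{\lambda}\in M_{N}(R_{\lambda}[Y])$ is idempotent, $f_{\lambda}\in R_{\lambda}[Y]$ is monic (its leading coefficient $1$ already lies in $R_{\lambda}$), the module $P_{\lambda}:=e_{\lambda}R_{\lambda}[Y]^{N}$ is projective with $P_{\lambda}\otimes_{R_{\lambda}[Y]}R[Y]\cong P$, and—after enlarging $\lambda$ once more so that the finitely many equations $e_{\lambda}A_{\lambda}=A_{\lambda}$, $A_{\lambda}B_{\lambda}=e_{\lambda}$, $B_{\lambda}A_{\lambda}=I_{n}$ already hold over $R_{\lambda}[Y]_{f_{\lambda}}$—the module $(P_{\lambda})_{f_{\lambda}}$ is free of rank $n$.

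Next I would apply the Quillen--Suslin theorem over the Noetherian ring $R_{\lambda}$ (this is the classical result of \cite{Q,Su1}: a projective $R_{\lambda}[Y]$-module that becomes free after inverting a monic polynomial is free) to conclude $P_{\lambda}\cong R_{\lambda}[Y]^{n}$. Tensoring along $R_{\lambda}[Y]\ra R[Y]$ then gives $P\cong R[Y]^{n}$, which is the assertion.

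The only real work is the descent bookkeeping in the second step; there is no new ring-theoretic content once one is over a Noetherian base. The hard part—such as it is—is to make sure that the trivialisation of $P_{f}$, and not merely the module $P$ and the polynomial $f$, descends to one finite stage: an isomorphism between finitely presented modules, together with its inverse and the two composition identities, is defined by finitely many equations over $R$, so passing to a sufficiently large $R_{\lambda}$ captures it. Everything else—monicity, idempotency, the rank—is manifestly preserved under the base change $R_{\lambda}\ra R$.
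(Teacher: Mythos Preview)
Your proposal is correct and follows exactly the route the paper indicates: the paper does not give a detailed proof but only remarks, immediately before the statement, that ``any commutative ring is a filtered union of Noetherian commutative rings. Hence following result will follow from Noetherian case.'' Your argument is a careful fleshing-out of precisely this sentence---descending the idempotent for $P$, the monic $f$, and the trivialisation of $P_f$ to a finitely generated $\BZ$-subalgebra $R_\lambda$, invoking the Noetherian Quillen--Suslin/Horrocks theorem there, and tensoring back---so the approaches coincide.
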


We state a result of Yengui \cite{Y11} and Abedelfatah
\cite{AA} respectively.

\begin{theorem}\label{YA}
Let $A$ be a ring of dimension $d$, $Y$ a variable over $A$ and $n\geq
d+2$. Then

$(i)$ $E_n(A[Y])$ acts transitively on $\Um_n(A[Y])$.

$(ii)$ $E_n(A[Y,Y^{-1}])$ acts transitively on $\Um_n(A[Y,Y^{-1}])$.
\end{theorem}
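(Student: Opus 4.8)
\medskip\noindent\emph{Proof idea.} The plan is to prove both parts by induction on $d=\dim A$, following Plumstead's argument for part~$(i)$ and Mandal's for part~$(ii)$, and to remove the Noetherian hypothesis by substituting, for every appeal to ``localize at a maximal ideal of $A$ and use that the local ring is Noetherian of dimension $\le d$'', the constructive description of Krull dimension: $\dim A\le d$ if and only if $\dim\big(A/\mathfrak I_A(a)\big)\le d-1$ for every $a\in A$, where $\mathfrak I_A(a)=aA+(0:_A a^{\infty})$ is the Krull boundary ideal of $a$ (for $A$ reduced, $\mathfrak I_A(a)=aA+(0:_A a)$). First I would make two preliminary reductions. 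To reduce to $A$ reduced: $\mathrm{nil}(A[Y])=(\mathrm{nil}\,A)[Y]$ and, by Lemma~\ref{lift}, $E_n(A[Y])\to E_n\big((A/\mathrm{nil}\,A)[Y]\big)$ is surjective, so if a unimodular row is carried to $e_1$ by an elementary automorphism over $(A/\mathrm{nil}\,A)[Y]$, then a lift carries it to $e_1$ plus a vector with nilpotent entries, whose first coordinate is a unit, and any unimodular row with a unit coordinate is $E_n$-equivalent to $e_1$; the same works with $Y^{-1}$ adjoined. To normalize: since $n-1\ge d+1$, Theorem~\ref{H} applied over $A$ lets us assume $v(0)=e_1$ in case~$(i)$ and $v(1)=e_1$ in case~$(ii)$.

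For the inductive step in $(i)$ (so $d\ge 1$ and $n\ge d+2\ge 3$), take $v=(v_0,\dots,v_{n-1})\in\Um_n(A[Y])$ with $v(0)=e_1$. After an elementary operation of the form $v_0\mapsto v_0+Y^k v_1$ with $k\gg 0$ one may assume $v_0$ has strictly largest $Y$-degree among the $v_i$; let $a\in A$ be its leading coefficient. The image of $v$ in $\Um_n\big((A/\mathfrak I_A(a))[Y]\big)$ is $E_n$-equivalent to $e_1$ by the induction hypothesis, since $\dim\big(A/\mathfrak I_A(a)\big)\le d-1$; lifting, we get $\sigma\in E_n(A[Y])$ with $\sigma v\equiv e_1\pmod{\mathfrak I_A(a)[Y]}$. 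Over the localization $A_a[Y]$ the polynomial $v_0$ has unit leading coefficient, hence is, up to a unit, monic, and then Suslin's monic-polynomial manoeuvre (the device underlying Theorem~\ref{QS1}) makes $v$ elementarily trivial over $A_a[Y]$. A Quillen-type patching then glues the trivialization over $A_a[Y]$ with the one over the boundary quotient; here Proposition~\ref{split} supplies exactly the splitting of elementary matrices over comaximal localizations that the patching requires, and $n\ge 3$ is what makes it available. Iterating this down a chain of boundary ideals of length $d+1$ yields $\sigma'\in E_n(A[Y])$ with $\sigma' v=e_1$. Part~$(ii)$ runs along the same lines over the pair $B=A[Y]$, $B_Y=A[Y,Y^{-1}]$: Mandal's device is to produce an elementary trivialization valid ``near $Y=0$'' (over $A[Y]$, using the normalization $v(0)=e_1$ and the ideas of $(i)$) and one valid ``near $Y=\infty$'' (over $A[Y^{-1}]$, with $v$ rewritten in $Y^{-1}$), and then to glue the two over $A[Y,Y^{-1}]$, again using $n\ge d+2\ge 3$.

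The hard part is this patching in the absence of Noetherian hypotheses. In the classical proofs one verifies $E_n$-triviality of $v(Y)$ locally at each maximal ideal $\mathfrak m$ of $A$ --- where $A_{\mathfrak m}[Y]$ is handled by Horrocks' theorem, using crucially that $A_{\mathfrak m}$ is Noetherian local of dimension $\le d$ --- and concludes by Quillen patching over the cover $\{A_{\mathfrak m}\}$; non-Noetherianly neither the Noetherian local structure nor this infinite cover is available. The replacement, which is the technical core of Yengui's and Abedelfatah's work, is to trade the family of localizations at maximal ideals for a \emph{finite} system of comaximal localizations of $A$, whose existence and whose bounded length ($\le d+1$) are guaranteed by the constructive dimension inequality through the boundary ideals $\mathfrak I_A(a)$, and then to check that Horrocks' patching for $A[Y]$ (respectively Mandal's two-end patching for $A[Y,Y^{-1}]$) still goes through along such a finite comaximal cover. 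Checking this, together with feeding the induction correctly from the base case $d=0$ --- where $A$ reduced is von Neumann regular, so that all projective modules over $A[Y]$ and $A[Y,Y^{-1}]$ are free by Theorem~\ref{zero1} and one gets $E_n$-transitivity on $\Um_n$ for all $n\ge 2$ --- is where the real work lies.
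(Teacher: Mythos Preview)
The paper does not give a proof of Theorem~\ref{YA}; it is quoted in the preliminaries as an external result, part~$(i)$ attributed to Yengui~\cite{Y11} and part~$(ii)$ to Abedelfatah~\cite{AA}, and is then invoked as a black box in the proofs of Theorems~\ref{K3} and~\ref{K4}. So there is no ``paper's own proof'' to compare your sketch against.

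That said, your outline is in the spirit of the cited proofs: the reduction to reduced $A$, the normalization $v(0)=e_1$ via Heitmann, Suslin's monic-polynomial lemma, and induction on $d$ through the constructive Krull-dimension description via boundary ideals $\mathfrak I_A(a)$ are exactly Yengui's ingredients. Two places deserve tightening. First, your appeal to Proposition~\ref{split} is not the right patching device: that proposition splits an element of $E_n(B_{ab})$ for \emph{comaximal} $a,b\in B$, whereas you are trying to combine a trivialization over the localization $A_a[Y]$ with one modulo the ideal $\mathfrak I_A(a)[Y]$, and these are not two comaximal localizations of a common ring. Yengui's actual mechanism is different: one shows directly that, after an element of $E_n(A[Y])$, the leading coefficient of $v_0$ can be upgraded from $a$ to a sum $a+b$ with $b$ in the annihilator of $a$, and iterating this along a boundary chain of length $d{+}1$ forces the leading coefficient to become a unit; no fibre-product patching of the Proposition~\ref{split} type is used. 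Second, in your base case $d=0$ you deduce $E_n$-transitivity from freeness of projective modules over $A[Y]$; freeness only yields $\GL_n$-transitivity on $\Um_n$, and promoting this to $E_n$ (in particular for $n=2$) is a separate argument --- precisely what Abedelfatah supplies in~\cite{AA1} for polynomial rings over zero-dimensional $R$. You do flag both issues as ``where the real work lies'', which is fair for a proof idea.
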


\section{Zero dimension case}

In this section we prove our first result.

\begin{proposition}\label{zero2}
Let $\Sigma(n)$ be set of rings which is closed w.r.t. following
properties: 

$(i)$ If $R\in \Sigma(n)$ and $0\not= f\in R[Y]$ is non-unit, then
$R[Y]_{f(1+fR[Y])}\in \Sigma(n)$.

$(ii)$ If $R\in \Sigma(n)$, then all 
projective modules over $R[Y_1,\ldots,Y_n]$ are
free, where $Y_1,\ldots,Y_n$ are variables over $R$. 

Then, for $R\in \Sigma(n)$, all
 projective modules over $R[Y_1,\ldots,Y_n,(f_1\ldots f_m)^{-1}]$
are free, where $m\leq n$ and $f_i\in R[Y_i]$.
\end{proposition}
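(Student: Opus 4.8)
The plan is to induct on $m$, the case $m=0$ being precisely hypothesis $(ii)$. It is convenient to record first that $(ii)$ upgrades to fewer variables: if $S\in\Sigma(n)$ and $k\le n$, then every projective $S[Z_1,\ldots,Z_k]$-module $P$ is free. Indeed $P\otimes_{S[Z_1,\ldots,Z_k]}S[Z_1,\ldots,Z_n]$ is projective over the $n$-variable polynomial ring, hence free by $(ii)$, and the substitution $Z_{k+1}=\cdots=Z_n=0$ recovers $P$ as a free module. I will use this throughout without further comment.

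For the inductive step, assume the assertion for $m-1$ over every member of $\Sigma(n)$. Let $R\in\Sigma(n)$, $f_i\in R[Y_i]$ for $1\le i\le m$, and $A=R[Y_1,\ldots,Y_n,(f_1\cdots f_m)^{-1}]$. We may assume each $f_i$ is a nonzero non-unit of $R[Y_i]$: if some $f_i=0$ then $A=0$ and there is nothing to prove, and if some $f_i$ is a unit then discarding it does not change $A$ and reduces to the case of $m-1$ inverted elements. Now set $a:=f_m$ and $B:=R[Y_1,\ldots,Y_n,(f_1\cdots f_{m-1})^{-1}]$, so that $a\in R[Y_m]\subseteq B$ and $A=B_a$; by the inductive hypothesis (applied to $R$ with $n$ variables and $m-1$ inverted elements) every projective $B$-module is free.

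Here property $(i)$ enters: since $0\ne a=f_m\in R[Y_m]$ is a non-unit, the ring $R':=R[Y_m]_{a(1+aR[Y_m])}$ lies in $\Sigma(n)$. Comparing the multiplicative sets generated inside $R[Y_1,\ldots,Y_n]$ one checks directly that
$$A':=R'\bigl[Y_1,\ldots,\widehat{Y_m},\ldots,Y_n,(f_1\cdots f_{m-1})^{-1}\bigr]=B_{a(1+aR[Y_m])},$$
so $A'$ is a polynomial ring in $n-1$ variables over $R'\in\Sigma(n)$ with the $m-1$ elements $f_1,\ldots,f_{m-1}$ (viewed over $R'$) inverted. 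By the inductive hypothesis, every projective $A'$-module is free.

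Finally I would patch. Let $P$ be a projective $A$-module of constant rank $r$; it is finitely presented. Since $1+aR[Y_m]$ is multiplicative, $A'=\varinjlim_{s}B_{as}$ is a filtered colimit over $s\in 1+aR[Y_m]$, and $P\otimes_AA'$ is projective of constant rank $r$ over $A'$, hence free; as $P$ is finitely presented this isomorphism is already defined at a finite stage, i.e. $P\otimes_AB_{ab}\cong B_{ab}^{\,r}$ for some single $b=1+ah_0\in 1+aR[Y_m]$. Because $b-h_0a=1$, the elements $a$ and $b$ are comaximal in $B$, so $\Spec B=D(a)\cup D(b)$ and we are in a Mayer--Vietoris situation: gluing the module $P$ over $B_a=A$ to the free module $B_b^{\,r}$ over $B_b$ along the isomorphism $P\otimes_AB_{ab}\cong B_{ab}^{\,r}$ produces a finitely generated projective $B$-module $Q$ with $Q_a\cong P$. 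By the inductive hypothesis $Q\cong B^{\,r}$, and therefore $P\cong Q_a\cong A^{\,r}$ is free, completing the induction. The heart of the matter is the identification $A'=B_{a(1+aR[Y_m])}$, and this is exactly why $(i)$ is phrased with the factor $1+aR[Y_m]$: adjoining $1+aR[Y_m]$ is precisely what converts ``invert $f_m$ over the base $R[Y_m]$'' into ``work over the enlarged base $R'\in\Sigma(n)$'', dropping the count of inverted elements by one, while the comaximality of $a$ with $1+aR[Y_m]$ is precisely what makes the Mayer--Vietoris gluing legitimate; everything else is routine colimit and descent bookkeeping, the one point needing care being the reduction to $f_m$ a genuine nonzero non-unit before $(i)$ is applied.
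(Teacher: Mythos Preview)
Your argument is correct and follows essentially the same route as the paper: both induct on $m$, localize at $S=1+f_mR[Y_m]$ so that property $(i)$ places the new base ring in $\Sigma(n)$, apply the inductive hypothesis to free up $S^{-1}P$, spread out to a single $g\in S$, and then patch $P$ with a free module over the Milnor square for $(C,C_{f_m},C_g,C_{f_mg})$ (your $B$ is the paper's $C$). Your write-up is in fact slightly more careful than the paper's: you explicitly dispose of the cases $f_m=0$ or $f_m$ a unit before invoking $(i)$, and you record the dummy-variable trick needed to apply the induction hypothesis over $R'$ with only $n-1$ remaining variables, a point the paper leaves implicit.
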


\begin{proof}
Let $P$ be a projective $A=R[Y_1,\ldots,Y_n,(f_1\ldots
  f_m)^{-1}]$-module of rank $r$.  If $m=0$, then $P$ is free by
assumption $(ii)$.  Assume $m>0$ and use induction on $m$. Write $C=
R[Y_1,\ldots,Y_n,(f_1\ldots f_{m-1})^{-1}]$,
$S=1+f_mR[Y_m]$ and $B=R[Y_m]_{f_mS}$.  Then $A=C_{f_m}$, $B\in
\Sigma(n)$ by assumption $(i)$ and
$S^{-1}A=B[Y_1,\ldots,Y_{m-1},Y_{m+1},\ldots,Y_n,(f_1\ldots
  f_{m-1})^{-1}]$. By induction on $m$, $S^{-1}P$ is
free. Since $P$ is finitely generated, we can find $g\in S$ such that
$P_g$ is free. Note that $f_m$ and $g$ are comaximal elements of
$R[Y_m]$. Consider the fiber product diagram
$$\xymatrix{ C\ar [r] \ar [d] & C_{f_m}=A \ar [d] \\ C_g \ar [r] &
  C_{f_mg}=A_g }.$$ Patching projective modules $P$ over $C_{f_m}$ and
$(C_g)^r$ over $C_g$, we get  $P \iso Q_{f_m}$, where $Q$ is a projective
$C$-module of rank $r$. By induction on $m$, projective modules over $C$
are free. Hence $Q$ is free and therefore $P$ is free.  $\hfill \gj$
\end{proof}
\medskip

Let $\Sigma (n)$ be the set of rings of dimension $0$. If $R\in
\Sigma(n)$ and $0\not =f\in R[Y]$ is a non-unit, then $\dim R[Y]=1$
and $\dim R[Y]_{f(1+fR[Y])}=0$. Hence $R[Y]_{f(1+fR[Y])} \in
\Sigma(n)$. Using (\ref{zero1}), projective modules over polynomial ring
$R[Y_1,\ldots,Y_n]$ are free. Hence $\Sigma(n)$ satisfies 
hypothesis $(i,ii)$ of (\ref{zero2}). Therefore, we get the following
generalization of (\ref{zero1}).

\begin{proposition}\label{zero}
Let $R$ be a ring of dimension $0$ and $A=R[Y_1,\ldots,Y_n,(f_1\ldots
  f_m)^{-1}]$, where $m\leq n$, $Y_1,\ldots,Y_n$ are variables over
$R$ and $f_i\in R[Y_i]$. Then all projective $A$-modules are free.
\end{proposition}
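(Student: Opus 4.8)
The plan is to deduce Proposition \ref{zero} directly from Proposition \ref{zero2} by exhibiting a suitable family $\Sigma(n)$. First I would take $\Sigma(n)$ to be the class of all commutative rings of Krull dimension $0$, for every $n$. The work then reduces to checking that this class satisfies hypotheses $(i)$ and $(ii)$ of Proposition \ref{zero2}, after which the conclusion is immediate: $A=R[Y_1,\ldots,Y_n,(f_1\ldots f_m)^{-1}]$ is a localization of a polynomial ring over $R\in\Sigma(n)$, so all projective $A$-modules are free.

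For hypothesis $(ii)$, I would simply invoke the Brewer--Costa theorem (Theorem \ref{zero1}): if $R$ has dimension $0$, then all projective modules over $R[Y_1,\ldots,Y_n]$ are free. This is exactly the content of $(ii)$, so nothing more is needed there. For hypothesis $(i)$, I must show that if $R$ has dimension $0$ and $0\neq f\in R[Y]$ is a non-unit, then the ring $B=R[Y]_{f(1+fR[Y])}$ again has dimension $0$. The key point is that $R[Y]$ has dimension $1$ when $\dim R=0$ (this is the standard fact $\dim R[Y]=\dim R+1$ for the polynomial ring, valid without Noetherian hypotheses by Seidenberg's bound for dimension $0$ — actually one only needs $\dim R[Y]\leq 2\dim R+1=1$). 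Then I would argue that inverting $f(1+fR[Y])$ kills all height-$1$ primes: a prime $\mathfrak{p}$ of $R[Y]$ survives in $B$ only if it contains neither $f$ nor any element of $1+fR[Y]$, equivalently $f\notin\mathfrak{p}$ and $\mathfrak{p}+fR[Y]\neq R[Y]$, i.e. $\mathfrak{p}$ lies in the zero set of $f$ in the patch away from $V(f)$; combined with the height-$1$ constraint this forces $\mathfrak{p}$ to be minimal, hence $\dim B=0$.

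The main obstacle — and it is a mild one — is justifying that $\dim R[Y]_{f(1+fR[Y])}=0$ cleanly in the non-Noetherian setting, since one cannot freely use primary decomposition or Noetherian induction on the prime spectrum. I would handle this by the comaximality trick already visible in the proof of Proposition \ref{zero2}: $f$ and any $g\in 1+fR[Y]$ are comaximal in $R[Y]$, so $\Spec\big(R[Y]_{f(1+fR[Y])}\big)$ consists of primes $\mathfrak{p}$ with $f\notin\mathfrak{p}$ but $\mathfrak{p}$ comaximal with $f$; such a $\mathfrak{p}$, when contracted to $R$, gives a prime $\mathfrak{q}=\mathfrak{p}\cap R$ which is maximal (as $\dim R=0$), and over the field $R_{\mathfrak{q}}/\mathfrak{q}R_{\mathfrak{q}}$ the ring $R[Y]\otimes_R \kappa(\mathfrak{q})$ is a PID, in which the localization at $f$ away from $1+f(\cdot)$ is a field or zero. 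This localizes the dimension computation to the classical one-variable polynomial ring over a field, giving $\dim B=0$ and completing the verification. Since the rest of the deduction is formal, this finishes the proof.
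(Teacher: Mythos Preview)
Your proposal is correct and follows exactly the paper's own argument: take $\Sigma(n)$ to be the class of zero-dimensional rings, verify hypothesis $(ii)$ via Brewer--Costa (Theorem~\ref{zero1}) and hypothesis $(i)$ via $\dim R[Y]=1$ together with $\dim R[Y]_{f(1+fR[Y])}=0$, then apply Proposition~\ref{zero2}. The paper simply asserts the dimension drop without further comment, whereas you spell it out; your second justification (contract to $R$, reduce to $\kappa(\mathfrak q)[Y]$, observe that in a PID any maximal prime avoiding $f$ makes $f$ a unit) is the right one, though the phrase ``$\mathfrak p$ lies in the zero set of $f$ in the patch away from $V(f)$'' is garbled and should be dropped.
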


\begin{theorem}\label{elem}
Let $R$ be a ring of dimension $0$ and $A=R[Y_1, \ldots,Y_n,(f_1\ldots
  f_m)^{-1}]$, where $m\leq n$, $Y_1,\ldots,Y_n$ are variables over
$R$ and $f_i\in R[Y_i]$. Then $E_r(A)$ acts transitively on $\Um_r(A)$
for $r\geq 3$.
\end{theorem}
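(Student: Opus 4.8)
The plan is to run a reduction exactly parallel to the proof of Proposition~\ref{zero2}, but now tracking unimodular rows and elementary actions instead of projective modules. Fix $r\geq 3$ and a unimodular row $v\in\Um_r(A)$ with $A=R[Y_1,\ldots,Y_n,(f_1\ldots f_m)^{-1}]$. If $m=0$ the claim is Abedelfatah's theorem \cite{AA1} (projective modules over $R[Y_1,\ldots,Y_n]$ are free, and $E_r$ acts transitively on $\Um_r$ for $r\geq 3$), so we induct on $m$. Write $C=R[Y_1,\ldots,Y_n,(f_1\ldots f_{m-1})^{-1}]$, $S=1+f_mR[Y_m]$, $B=R[Y_m]_{f_mS}$, so that $A=C_{f_m}$; as observed after Proposition~\ref{zero2}, $\dim B=0$, hence $B\in\Sigma(n)$, and $S^{-1}A=B[Y_1,\ldots,\widehat{Y_m},\ldots,Y_n,(f_1\cdots f_{m-1})^{-1}]$ is again a ring of the type covered by the inductive hypothesis (with $B$ zero-dimensional playing the role of the ground ring).

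The first step is to make $v$ elementarily trivial after localizing at $S$: by the induction hypothesis applied to $S^{-1}A$, the image of $v$ in $\Um_r(S^{-1}A)$ is $E_r(S^{-1}A)$-equivalent to $e_1=(1,0,\ldots,0)$. Clearing denominators, there is some $g\in S$ and some $\sigma\in E_r(A_g)$ (after further inverting we may take $\sigma$ defined over $C_{f_mg}=A_g$) with $\sigma\cdot v = e_1$ over $A_g$. Note $f_m$ and $g$ are comaximal in $R[Y_m]$, hence in $A$. Now invoke Proposition~\ref{split} (the splitting lemma from Lam's book) for the pair of comaximal elements $f_m,g$ in $C$: since $r\geq 3$, the element $\sigma\in E_r(C_{f_mg})$ factors as $\sigma=(\alpha)_{f_m}(\beta)_g$ with $\alpha\in E_r(C_g)$ and $\beta\in E_r(C_{f_m})=E_r(A)$. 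Replacing $v$ by $\beta\cdot v$, we may assume $v$ itself, viewed over $A_g=C_{f_mg}$, is carried to $e_1$ by $(\alpha)_{f_m}$, i.e. the image of $v$ in $\Um_r(C_g)$ satisfies $\alpha\cdot(v\ \text{mod}\ g)\in\Um_r(C_g)$ "comes from" $e_1$ over $C_{f_mg}$.

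The patching step is the heart of the argument. Over $C_{f_m}=A$ we have the row $v$, and over $C_g$ we have the row $\alpha^{-1}e_1\in\Um_r(C_g)$; these two rows agree over $C_{f_mg}$ (both equal $\alpha^{-1}e_1$ there, using the previous step), so they patch to a unimodular row $w\in\Um_r(C)$ with $w_{f_m}=v$ and $w_g\sim_{E_r(C_g)} e_1$. Equivalently, using the fiber-product square
$$\xymatrix{ C\ar[r]\ar[d] & C_{f_m}=A\ar[d] \\ C_g\ar[r] & C_{f_mg}=A_g }$$
and Lemma~\ref{lift} (surjectivity of $E_r(C)\to E_r(C_{g})$ is not quite what we need; rather we use the Milnor-patching description of $\Um_r$ over a fiber product), we produce $w$. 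By the inductive hypothesis applied to $C$ (which is of the same form with $m-1$ inverted polynomials), $E_r(C)$ acts transitively on $\Um_r(C)$, so $w\sim_{E_r(C)}e_1$; localizing at $f_m$ gives $v=w_{f_m}\sim_{E_r(A)}e_1$, completing the induction.

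The main obstacle I anticipate is the patching bookkeeping: one must check that the two local rows genuinely agree over $A_g=C_{f_mg}$ (which forces the careful use of Proposition~\ref{split} to "split off" the $C_{f_m}$-part of $\sigma$ first), and that the Milnor patching of unimodular rows over the fiber-product square is valid here — this is where $r\geq 3$ is essential, since Proposition~\ref{split} fails for $r=2$. A minor subtlety is ensuring $B=R[Y_m]_{f_mS}$ really lies in the inductive framework, but this is exactly the zero-dimensionality computation already recorded after Proposition~\ref{zero2}, so it transfers verbatim.
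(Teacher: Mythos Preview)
Your argument is correct and is essentially identical to the paper's own proof: base case $m=0$ via Abedelfatah, induction on $m$ using the zero-dimensional ring $B=R[Y_m]_{f_mS}$, clearing denominators to get $\sigma\in E_r(C_{f_mg})$, splitting $\sigma=(\alpha)_{f_m}(\beta)_g$ via Proposition~\ref{split}, patching $\beta v$ and $\alpha^{-1}e_1$ to a unimodular row $w$ over $C$, and finishing by the inductive hypothesis on $C$. The only cosmetic slip is the stray reference to Lemma~\ref{lift}, which (as you yourself note) is not the relevant tool---the patching of unimodular rows over the Zariski cover $\{C_{f_m},C_g\}$ is just the sheaf property for $C$ together with the local nature of unimodularity, exactly as the paper uses it.
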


\begin{proof}
The case $m=0$ is due to Abedelfatah \cite{AA1}. Assume $m>0$ and use
induction on $m$. Let $v\in E_r(A)$. Write $C=
R[Y_1,\ldots,Y_n,(f_1\ldots f_{m-1})^{-1}]$,
$S=1+f_mR[Y_m]$ and $B=R[Y_m]_{f_mS}$. Then $B$ is $0$ dimensional,
$A=C_{f_m}$ and
$S^{-1}A=B[Y_1,\ldots,Y_{m-1},Y_{m+1},\ldots,Y_n,(f_1\ldots
  f_{m-1})^{-1}]$. By induction on $m$, $E_r(S^{-1}A)$ acts
transitively on $\Um_r(S^{-1}A)$.  Hence there exist $\sigma \in
E_r(S^{-1}A)$ such that $\sigma(v)=e_1=(1,0,\ldots,0)$.  We can find $g\in S$ and
$\wt \sigma \in E_r(C_{f_mg})$ such that $\wt \sigma(v)=e_1 $. Note
that $f_m$ and $g$ are comaximal elements of $R[Y_m]$. Consider the
fiber product diagram
$$\xymatrix{ C\ar [r] \ar [d] & C_{f_m}=A \ar [d] \\ C_g \ar [r] &
  C_{f_mg}=A_g }$$ By (\ref{split}), $\wt \sigma$ has a splitting $\wt
\sigma=(\ga)_{f_m}(\gb)_g$, where $\ga\in E_r(C_g)$ and $\gb\in
E_r(C_{f_m})$. We have unimodular elements $\gb(v) \in \Um_r(C_{f_m})$
and $\ga^{-1}(e_1)\in \Um_r(C_g)$ whose images in $C_{f_mg}$ are
same. Hence patching $\gb(v)$ and $\ga^{-1}(e_1)$, we get $w\in
\Um_r(C)$ such that its image in $C_{f_m}$ is $\gb(v)$. By induction
on $m$, $E_r(C)$ acts transitively on $\Um_r(C)$. Hence there exist
$\phi\in E_r(C)$ such that $\phi(w)=e_1$. If $\Phi_1\in E_r(C_{f_m})$
is the image of $\phi$, then $\Phi_1(\ga(v))=e_1$. Write
$\Phi=\Phi_1\ga \in E_r(A)$, we are done.  $\hfill \gj$
\end{proof}

\section{Main Theorem}

The following result is proved in (\cite{Kes}, Lemma 3.3) with the
assumption that ring is Noetherian. Using (\ref{w}), same proof works
for non-Noetherian ring. Hence we omit the proof.

\begin{lemma}\label{k1}
Let $A$ be a reduced ring and $P$ a projective $A$-module of rank
$r$. Assume there exist a non-zerodivisor $s\in A$ such that
(\ref{l95}) holds. Assume $R^r$ is cancellative, where
$R=A[X]/(X^2-s^2X)$. Then any element of $\Um^1(A\op P,s^2A)$ can be
taken to $(1,0)$ by some element of $\Aut(A\op P,sA)$.
\end{lemma}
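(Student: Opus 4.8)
The plan is to reduce the problem to a unimodular row over $A$, to interpolate that row over the auxiliary ring $R=A[X]/(X^2-s^2X)$ between a visibly trivial row and the given one, to complete this interpolating row using the cancellation of $R^r$, and finally to turn the completion into an automorphism of $A\op P$ via (\ref{w}). Throughout fix the data $p_1,\dots,p_r\in P$, $\varphi_1,\dots,\varphi_r\in P^*$ and $t$ of (\ref{l95}); after replacing $s$ by $s^t$ (which merely replaces $R$ by $A[X]/(X^2-s^{2t}X)$) we may assume $\varphi_i(p_j)=s\delta_{ij}$, $sP\subset F:=\sum Ap_i$, $sP^*\subset G$, and we write $a=1+s^2c_0$.

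\emph{Reduction to $p\in s^2P$ and to a row.} The transvection $\Gamma_{-p}\in\Aut(A\op P,sA)$ sends $(a,p)$ to $(a,(1-a)p)=(a,-s^2c_0p)$, so we may assume $p\in s^2P$. Since $sP\subset F$ we then have $s^2P\subset F$, so $p=\sum c_ip_i$ for suitable $c_i\in A$; applying $\varphi_i$ gives $\varphi_i(p)=sc_i$, and writing $p=s^2q$ one gets $s\bigl(c_i-s\varphi_i(q)\bigr)=0$, hence $c_i\in sA$ as $s$ is a non-zerodivisor. If $ab+\psi(p)=1$ with $\psi\in P^*$, then $ab+\sum c_i\psi(p_i)=1$, so $(a,c_1,\dots,c_r)\in\Um_{r+1}(A)$, with $a\in 1+s^2A$ and $c_i\in sA$. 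Note also $(a,p)=\iota(a,c_1,\dots,c_r)$ where $\iota\colon A^{r+1}\ra A\op P$ is $\iota(x,v)=(x,\sum v_ip_i)$, and $\iota(1,c_1,\dots,c_r)=(1,p)$.

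\emph{Homotopy over $R$ and completion.} One checks that $R=A[X]/(X^2-s^2X)$ is reduced, that $s$ is a non-zerodivisor in $R$, that $R$ carries two retractions $r_0,r_{s^2}\colon R\ra A$ (with $X\mapsto 0$, resp.\ $X\mapsto s^2$) satisfying $r_{s^2}(f)\equiv r_0(f)\bmod s^2A$ for every $f\in R$, and that $R_s\iso A_s\times A_s$. Put $w(X)=(1+c_0X,c_1,\dots,c_r)\in R^{r+1}$, so $w(0)=(1,c_1,\dots,c_r)$ and $w(s^2)=(a,c_1,\dots,c_r)$. Then $w(X)\in\Um_{r+1}(R)$: modulo $sR$ the $c_i$ vanish and $1+c_0X$ is a unit since $X$ is nilpotent in $R/sR$, while over $R_s\iso A_s\times A_s$ the row specialises to the unimodular rows $w(0),w(s^2)$; as every prime of $R$ meets $\Spec R_s$ or $\Spec(R/sR)$, unimodularity over $R$ follows. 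Hence $\ker\bigl(w(X)\colon R^{r+1}\ra R\bigr)$ is a stably free $R$-module of rank $r$, so by the cancellation of $R^r$ it is free and $w(X)$ is completable over $R$. Specialising the completion at $X=0$ and $X=s^2$ and using $r_{s^2}\equiv r_0\bmod s^2A$ produces $\mu\in\GL_{r+1}(A)$ with $\mu\equiv I\bmod s^2A$ and $\mu\cdot(a,c_1,\dots,c_r)^t=(1,c_1,\dots,c_r)^t$.

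\emph{Back to an automorphism.} Apply (\ref{w}) to the module $A\op P$ with $r+1$ in place of $r$, the elements $(1,0),(0,p_1),\dots,(0,p_r)$, the functionals $\mathrm{pr}_A,(0,\varphi_1),\dots,(0,\varphi_r)$, and $N=\mathrm{diag}(1,s,\dots,s)$, whose entries are non-zerodivisors and whose associated pairing matrix is $N$ itself. Since $\mu-I\in s^2M_{r+1}(A)$ we have $\mu\in\CM$, so $\Psi:=\Phi(\mu)\in\Aut_A(A\op P)$ is defined and, because $\Phi$ is a homomorphism satisfying the displayed formula, $\Psi\circ\iota=\iota\circ\mu$. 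As $N=\mathrm{diag}(1,s,\dots,s)$, the matrix $TN$ in the formula for $\Psi$ has all columns in $sA^{r+1}$, whence $\Psi\equiv\mathrm{id}\bmod sA$, i.e.\ $\Psi\in\Aut(A\op P,sA)$. Now $\Psi(a,p)=\Psi\iota(a,c_1,\dots,c_r)=\iota\mu(a,c_1,\dots,c_r)=\iota(1,c_1,\dots,c_r)=(1,p)$, and $\Gamma_{-p}\in\Aut(A\op P,sA)$ sends $(1,p)$ to $(1,0)$; the composite is the required element of $\Aut(A\op P,sA)$.

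The step I expect to be the main obstacle is the choice of the auxiliary ring together with the interpolating row: one needs $R=A[X]/(X^2-s^2X)$ precisely so that its two retractions agree modulo $s^2A$ (matching the $s^2$ in the hypothesis), and one needs $w(X)$ to be genuinely unimodular over $R$ while still specialising to the trivial row at $X=0$ and to $(a,c_1,\dots,c_r)$ at $X=s^2$ — this is why one first pushes $p$ into $s^2P$, which is exactly what forces the entries $c_i$ into $sA$ and hence keeps the manufactured automorphism $\Psi$ inside the relative group $\Aut(A\op P,sA)$.
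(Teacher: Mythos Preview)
Your argument is correct and follows exactly the route of \cite{Kes}, Lemma~3.3, to which the paper defers: reduce to a unimodular row with tail in $sA$, interpolate that row over $R=A[X]/(X^2-s^2X)$, complete it using cancellation of $R^r$, specialise at $X=0$ and $X=s^2$ to get a matrix $\mu\equiv I\bmod s^2A$, and push $\mu$ back to an automorphism of $A\op P$ via the Weimers homomorphism~(\ref{w}). This is precisely the paper's intended proof.

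One point of notation needs care. Your opening assertion $\Gamma_{-p}\in\Aut(A\op P,sA)$ is valid only under the convention---consistent with the paper's definition of $E^1(A\op M,J)$, in which the generators $\Gamma_m$ carry \emph{no} restriction on $m$---that $\Aut(A\op P,sA)$ consists of those automorphisms whose effect on the $A$-summand is $\equiv\mathrm{id}\bmod sA$. Under the stronger reading ``$\sigma\equiv\mathrm{id}$ on all of $(A\op P)/s(A\op P)$'' the claim is false for general $p$, and indeed the lemma itself fails in that reading: take $A=k[x]$, $P=A$, $s=x$, and $(a,p)=(1+x^2,1)$; no matrix in $I+x\,M_2(A)$ can send this to $(1,0)$, since the second row forces $1\in xA$. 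Your later phrase ``whence $\Psi\equiv\mathrm{id}\bmod sA$, i.e.\ $\Psi\in\Aut(A\op P,sA)$'' suggests you have the strong reading in mind, which is inconsistent with the first step; the fix is simply to state the intended (weaker) convention at the outset.
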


An immediate consequence of (\ref{k1}) is the following result. Its
proof is same as of (\cite{Kes}, Corollary 3.5) using (\ref{H}).  

\begin{corollary}\label{k11}
Let $A$ be a reduced ring of dimension $d$ and $P$ a projective
$A$-module of rank $d$. Assume there exist a non-zerodivisor
$s\in A$ such that (\ref{l95}) holds. Assume $R^d$ is cancellative,
where $R=A[X]/(X^2-s^2X)$. Then $P$ is cancellative.
\end{corollary}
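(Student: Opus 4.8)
The plan is to deduce Corollary \ref{k11} from Lemma \ref{k1} by exploiting the standard trick that allows one to pass from "unimodular elements in a level of the ideal generated by $s^2$" to "all unimodular elements" when the rank is at the critical value $d$. Since $A$ is reduced of dimension $d$ and $P$ has rank $d$, the hypothesis gives us a non-zerodivisor $s \in A$ such that (\ref{l95}) holds; equivalently $P_s$ is free of rank $d$, and we have the data $p_1,\ldots,p_d \in P$, $\varphi_1,\ldots,\varphi_d \in P^*$ and $t \in \BN$ with $s^t P \subset F = \sum A p_i$, $s^t P^* \subset G = \sum A\varphi_i$ and $(\varphi_i(p_j)) = \diag(s^t,\ldots,s^t)$. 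To show $P$ is cancellative it suffices to show that $E(A\op P)$ — or even just $\Aut(A\op P)$ — acts transitively on $\Um(A\op P)$; so fix an arbitrary $(a,p) \in \Um(A\op P)$ and produce an automorphism of $A\op P$ carrying it to $(1,0)$.

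First I would reduce modulo the ideal $sA$. The ring $A/sA$ has dimension $\leq d-1$, and $P/sP$ is a projective $A/sA$-module of rank $d \geq (d-1)+1$, so by Heitmann's theorem (\ref{H}) the group $E(\frac{A\op P}{s(A\op P)})$ acts transitively on $\Um(\frac{A\op P}{s(A\op P)})$. Hence the image of $(a,p)$ can be moved to $(1,0)$ modulo $sA$; by Lemma \ref{lift} the corresponding elementary automorphism lifts to $\sigma \in E(A\op P)$, and replacing $(a,p)$ by $\sigma(a,p)$ we may assume $(a,p) \equiv (1,0) \bmod s(A\op P)$, i.e. $a \in 1+sA$ and $p \in sP$. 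Now I want to boost this from the $s$-level to the $s^2$-level so that Lemma \ref{k1} applies. Write $a = 1 + sb$. If $a$ is already a unit, then $(a,p) \sim (1,0)$ trivially by an obvious automorphism, so assume not. Using that $a \equiv 1 \bmod sA$, one adjusts $(a,p)$ by an elementary automorphism of the form $\Gamma_q$ (adding a multiple of $p$-data) and then a $\Delta$-type automorphism supported on $F$, as in the proof of (\cite{Kes}, Corollary 3.5): the point is that working inside the "local" piece where $P$ looks free one can arrange, after multiplying through by suitable powers of $s$, that the new first coordinate lies in $1 + s^2 A$ and the new $P$-component lies in $s^2 P$. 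Thus we obtain $(a',p') \in \Um^1(A\op P, s^2A)$ in the same $\Aut(A\op P)$-orbit as $(a,p)$.

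At this stage Lemma \ref{k1} does the work, \emph{provided} its hypothesis "$R^d$ is cancellative" is met — but that is exactly the standing assumption of the Corollary, with $R = A[X]/(X^2 - s^2 X)$. (Note $\dim R = \dim A = d$ in general, but we do not need a dimension bound; we are simply handed the cancellation of $R^d$.) So Lemma \ref{k1} yields an element of $\Aut(A\op P, sA)$ taking $(a',p')$ to $(1,0)$, and composing all the automorphisms constructed above shows $(a,p)$ lies in the $\Aut(A\op P)$-orbit of $(1,0)$. Since $(a,p)$ was arbitrary, $\Aut(A\op P)$ acts transitively on $\Um(A\op P)$, which gives cancellation of $P$: if $P\op A \iso Q\op A$ then the image of $(1,0)$ under this isomorphism is a unimodular element of $A\op P$, hence equivalent to $(1,0)$, and splitting off the free rank-one summand gives $P \iso Q$ (and then one bootstraps to $P\op A^t \iso Q\op A^t \implies P\iso Q$ by induction on $t$).

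The main obstacle is the middle step — upgrading from the $s$-level to the $s^2$-level while staying in the same orbit. This is the only place where the special structure supplied by (\ref{l95}) (the existence of $p_i,\varphi_i$ with $(\varphi_i(p_j))$ scalar diagonal) is genuinely used, and it requires care about which powers of $s$ appear and about checking that the matrices one writes down are genuine automorphisms of $A\op P$ (not merely of the localization $A_s \op P_s$). Everything else is formal: (\ref{H}) plus (\ref{lift}) handles the first reduction, and (\ref{k1}) is a black box for the last. I expect the write-up to closely parallel (\cite{Kes}, Corollary 3.5), with (\ref{H}) substituted wherever the Noetherian version invoked Bass's cancellation theorem.
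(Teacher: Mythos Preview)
Your outline is correct and matches the paper's approach: bring an arbitrary unimodular element into $\Um^1(A\op P,s^2A)$ using Heitmann's theorem (\ref{H}) over a quotient together with the lifting lemma (\ref{lift}), then invoke (\ref{k1}). The one detour is the step you single out as the ``main obstacle'': there is no need to first reduce modulo $sA$ and then upgrade to the $s^2$-level. Since $A$ is reduced and $s$ is a non-zerodivisor, $s$ lies in no minimal prime of $A$; hence every prime containing $s^2$ (equivalently $s$) strictly contains a minimal prime, and so $\dim A/s^2A=\dim A/sA\le d-1$. You may therefore reduce modulo $s^2A$ from the outset, apply (\ref{H}) to $P/s^2P$ (which has rank $d\ge(d-1)+1$), lift via (\ref{lift}), and land directly in $\Um(A\op P,s^2A)\subset\Um^1(A\op P,s^2A)$, ready for (\ref{k1}). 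This is precisely how the paper handles the parallel reductions in the proofs of (\ref{ak11}) and (\ref{K3}), and it eliminates the only non-routine step in your write-up; the structure from (\ref{l95}) is then used solely inside the black box (\ref{k1}), not in the reduction.
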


Let $R$ be a ring and $I$ an ideal of $R$. For $n\geq 3$, let $E_n(I)$
be the subgroup of $E_n(R)$ generated by $E_{ij}(a)=I+ae_{ij}$ with
$a\in I$ and $1\leq i\not=j \leq n$.  Let $E_n(R,I)$ denote the normal
closure of $E_n(I)$ in $E_n(R)$.  We have two characterisation of
$E_n(R,I)$ due to Suslin-Vaserstein \cite{SV} and Stein \cite{Stein}
respectively.

\begin{proposition}\label{svs}
The kernel of the natural map $E_n(R)\ra E_n(R/I)$ is
isomorphic to $E_n(R,I)$.
\end{proposition}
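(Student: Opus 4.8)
The plan is to use the two natural homomorphisms $\pi : E_n(R) \to E_n(R/I)$ (reduction mod $I$) and the fact that $E_n(R,I)$, being by definition the normal closure of $E_n(I)$ inside $E_n(R)$, is a normal subgroup of $E_n(R)$. Since every generator $E_{ij}(a)$ with $a \in I$ maps to the identity in $E_n(R/I)$, and the kernel of $\pi$ is normal, we immediately get the inclusion $E_n(R,I) \subseteq \ker \pi$. So the whole content is the reverse inclusion $\ker \pi \subseteq E_n(R,I)$, i.e. every matrix in $E_n(R)$ that becomes the identity modulo $I$ already lies in the normal closure of the elementary $I$-matrices.

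First I would take $\sigma \in E_n(R)$ with $\bar\sigma = \mathrm{id}$ in $E_n(R/I)$ and write $\sigma$ as a word in elementary generators $\sigma = \prod_k E_{i_k j_k}(a_k)$ with $a_k \in R$. Working in the quotient, the image of this word is the identity, which gives a relation in $E_n(R/I)$; the idea is to lift a proof of that relation. Concretely, one uses the standard Steinberg-type manipulations: insert, between consecutive generators, elements of the form $E_{ij}(b)E_{ij}(-b)$, and push the ``defect'' (the part that is trivial mod $I$, hence has entries in $I$) to one end. A cleaner route is to reduce to the well-known case $I = $ the whole ring is trivial, and instead argue by relative stability: modify $\sigma$ on the right by a product of conjugates $g\,E_{ij}(a)\,g^{-1}$ with $a \in I$, $g \in E_n(R)$ — all of which lie in $E_n(R,I)$ — so as to clear $\sigma$ column by column to the identity. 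Each clearing step replaces an off-diagonal entry, which lies in $I$ because $\bar\sigma = \mathrm{id}$, by $0$ using an $E_{ij}(a)$ with $a \in I$ conjugated by whatever elementary matrix is needed to position it; since $n \geq 3$ there is always a free index available to perform the commutator identities $[E_{ij}(a), E_{jk}(1)] = E_{ik}(a)$ that generate the needed conjugates within $E_n(R,I)$.

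The main obstacle I expect is bookkeeping the normal-closure condition carefully: one must ensure that at every stage the correcting factors genuinely lie in $E_n(R,I)$ and not merely in some larger subgroup, which is exactly why the commutator formulas and the hypothesis $n \geq 3$ are essential — in the stable or $n\geq 3$ range, $E_n(I)$ is already normalized by $E_n(R)$ up to the moves we use, and the classical Suslin–Vaserstein argument (\cite{SV}) shows the normal closure is recovered. Once $\sigma$ has been reduced to the identity by right-multiplication by elements of $E_n(R,I)$, we conclude $\sigma \in E_n(R,I)$, giving $\ker\pi \subseteq E_n(R,I)$ and hence the claimed isomorphism. I would present this by citing the Suslin–Vaserstein computation for the nontrivial inclusion and spelling out only the easy inclusion in detail, since the paper already invokes \cite{SV} for the characterisation.
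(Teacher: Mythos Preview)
The paper does not prove this proposition at all: it is stated without argument as one of two known characterisations of $E_n(R,I)$, attributed to Suslin--Vaserstein \cite{SV}. Your final plan --- spell out the trivial inclusion $E_n(R,I)\subseteq\ker\pi$ and cite \cite{SV} for the reverse inclusion --- is therefore exactly what the paper does, except that the paper does not even record the easy direction.

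Your added sketch of the column-clearing argument for $\ker\pi\subseteq E_n(R,I)$ is correct in outline and goes beyond anything in the paper. One small point worth tightening if you ever write it out in full: for $\sigma\in\ker\pi$ the diagonal entries of $\sigma$ lie in $1+I$, not in $I$, so after clearing off-diagonal entries in a column you still have to normalise the diagonal entry; this is handled by the standard Whitehead-type identity expressing $\mathrm{diag}(u,u^{-1},1,\ldots,1)$ with $u\in 1+I$ as a product of $I$-elementary conjugates, again using $n\geq 3$. With that caveat your plan is sound.
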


\begin{proposition}\label{st}
Consider the following fiber product diagram
$$\xymatrix{ R(I)\ar [r]^{p_1} \ar [d]_{p_2} & R \ar [d]^{j_1} \\ R \ar
  [r]_{j_2} & R/I }$$ Then
$E_n(R,I)$ is kernel of
the natural surjection $E_n(p_1):E_n(R(I))\surj E_n(R)$.
\end{proposition}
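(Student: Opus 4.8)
Recall that in the fibre product diagram the ring $R(I)$ is the ``double'' $R(I)=\{(a,b)\in R\times R:a-b\in I\}$, with $p_1,p_2$ the two coordinate projections onto $R$. Both $p_1$ and $p_2$ are split surjections, a common section being the diagonal $a\mapsto(a,a)$; consequently $E_n(p_1)$ and $E_n(p_2)$ are (split) surjections of groups, which in particular explains why $E_n(p_1):E_n(R(I))\surj E_n(R)$ is the stated surjection. Put $J=\ker p_1=\{(0,a):a\in I\}$, an ideal of $R(I)$. Then $p_1$ induces an isomorphism $R(I)/J\iso R$, while $p_2$ restricts to an isomorphism $J\iso I$.

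The plan has two steps. First, apply Proposition \ref{svs} to the ring $R(I)$ and its ideal $J$: since the natural map $E_n(R(I))\ra E_n(R(I)/J)$ is exactly $E_n(p_1)$ under the identification $R(I)/J\cong R$, we obtain $\ker(E_n(p_1))=E_n(R(I),J)$, the normal closure in $E_n(R(I))$ of the subgroup generated by the elementary matrices $E_{ij}(x)$ with $x\in J$. Second, transport this subgroup along $p_2$. The homomorphism $E_n(p_2)$ is injective on $\ker(E_n(p_1))$: if $g$ lies in $\ker(E_n(p_1))\cap\ker(E_n(p_2))$, then viewing $g$ as a matrix over $R(I)\subset R\times R$ its two coordinate components are both the identity, so $g=I_n$ (equivalently, $\ker p_1\cap\ker p_2=0$). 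Its image is $E_n(R,I)$: the generators $E_{ij}(x)$, $x\in J$, are sent to $E_{ij}(p_2(x))$ with $p_2(x)$ running over $I$, so the subgroup they generate maps onto $E_n(I)$; and because $E_n(p_2)$ is a \emph{surjective} group homomorphism onto $E_n(R)$, it sends the normal closure $E_n(R(I),J)$ onto the normal closure of $E_n(I)$ in $E_n(R)$, which by definition is $E_n(R,I)$.

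Combining the two steps, $E_n(p_2)$ restricts to an isomorphism $\ker(E_n(p_1))\iso E_n(R,I)$, which is the assertion. Once Proposition \ref{svs} is in hand the argument is purely formal; the only points needing any care are that the image of a normal closure under a surjective homomorphism is the normal closure of the image --- so that surjectivity of $p_2$ (hence of $E_n(p_2)$) is genuinely used --- and the elementary observation $\ker p_1\cap\ker p_2=0$ in $R(I)$, which yields injectivity of $E_n(p_2)$ on $\ker(E_n(p_1))$.
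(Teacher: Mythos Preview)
Your argument is correct. The paper itself does not prove this proposition --- it is stated as a known result attributed to Stein \cite{Stein}, alongside the companion characterisation \ref{svs} of Suslin--Vaserstein, and both are simply quoted for later use. So there is no ``paper's proof'' to compare against; what you have supplied is a clean derivation of Stein's description from the Suslin--Vaserstein one.

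This is a genuinely different route from Stein's original (which predates \cite{SV} and proceeds via his general relativisation formalism). Your approach has the virtue of being entirely internal to the paper: once \ref{svs} is accepted, \ref{st} follows by the formal manipulation you give --- apply \ref{svs} to the pair $(R(I),\ker p_1)$, then push forward along the split surjection $p_2$. The two points you flag as needing care are exactly the right ones: surjectivity of $E_n(p_2)$ is what guarantees that normal closures are preserved, and the trivial intersection $\ker p_1\cap\ker p_2=0$ in $R(I)$ gives injectivity of $E_n(p_2)$ on $\ker E_n(p_1)$. Both are handled correctly.
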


Using (\ref{svs}, \ref{st},
\ref{note}) and following the proof of (\cite{AK1}, Lemma 3.3), we get
the following result. In \cite{AK1}, it is proved for Noetherian ring.

\begin{lemma}\label{ak1}
Let $A$ be a reduced ring and $P$ a projective $A$-module of rank
$r$. Assume there exist a non-zerodivisor $s\in A$ such that
(\ref{l95}) holds. Assume $E_{r+1}(B)$ acts transitively on
$\Um_{r+1}(B)$, where $B=A[X]/(X^2-s^2X)$.  Then any element of
$\Um(A\op P,s^2A)$ can be taken to $(1,0)$ by some element of $E(A\op
P)$.
\end{lemma}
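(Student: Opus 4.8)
The plan is: replace $(a,p)$ by a unimodular row over $A$ (using Lemma~\ref{l95}), solve the resulting row problem over the ``double'' $B=A[X]/(X^2-s^2X)$ with the hypothesis, carry the solving matrix back to $A$ through the two projections $B\to A$, and then apply Lemma~\ref{note}. Since $s$, hence $s^2$, is a non-zerodivisor, $X\mapsto(0,s^2)$ identifies $B$ with the fibre product $A(s^2A)=A\times_{A/s^2A}A$ occurring in Proposition~\ref{st}; let $p_1,p_2\colon B\to A$ be the two projections ($X\mapsto0$ and $X\mapsto s^2$), so that the composites $B\to A\to A/s^2A$ through $p_1$ and through $p_2$ coincide.

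First I would normalise $(a,p)$. Fix $p_1,\dots,p_r\in P$, $\varphi_1,\dots,\varphi_r\in P^*$, $t\in\BN$ as in Lemma~\ref{l95}, so $s^tP\subseteq F:=\sum_iAp_i$. Given $(a,p)\in\Um(A\op P,s^2A)$, the transvections $\Gamma_{-(1-a)^jp}\in E(A\op P)$ applied for $j=0,\dots,k-1$ carry $(a,p)$ to $(a,(1-a)^kp)$, and since $1-a\in s^2A$ one has $(1-a)^kp\in s^{2k+2}P\subseteq s^2F$ as soon as $2k\ge t$. So we may assume $p=\sum_ic_ip_i$ with every $c_i\in s^2A$; choosing $(b,\psi)\in A\op P^*$ with $ab+\psi(p)=1$, the row $(a,c_1,\dots,c_r)$ pairs with $(b,\psi(p_1),\dots,\psi(p_r))$ to $1$, hence lies in $\Um_{r+1}(A)$, and it is $\equiv e_1\pmod{s^2A}$.

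Because $(a,c_1,\dots,c_r)$ and $e_1$ agree mod $s^2A$, they glue to $v\in\Um_{r+1}(B)$ with $p_1(v)=(a,c_1,\dots,c_r)$, $p_2(v)=e_1$ (a vector over $B$ is unimodular iff its two $A$-images are). By hypothesis there is $\wt\sigma\in E_{r+1}(B)$ with $\wt\sigma(v)=e_1$; put $\sigma_i=p_i(\wt\sigma)\in E_{r+1}(A)$, so $\sigma_1(a,c_1,\dots,c_r)=e_1$, $\sigma_2(e_1)=e_1$, and $\ol{\sigma_1}=\ol{\sigma_2}$ in $E_{r+1}(A/s^2A)$ (both equal the image of $\wt\sigma$, the two reductions $B\to A/s^2A$ coinciding). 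Hence $\psi:=\sigma_2^{-1}\sigma_1$ satisfies $\psi(a,c_1,\dots,c_r)=e_1$ and, by Propositions~\ref{svs} and~\ref{st}, lies in $E_{r+1}(A,s^2A)$. A short commutator computation — using $E_{i1}(s^2x)=[E_{ik}(sx),E_{k1}(s)]$, which needs $r+1\ge3$, to rewrite the standard relative generators $E_{ji}(c)E_{ij}(s^2x)E_{ji}(-c)$ of $E_{r+1}(A,s^2A)$ — shows $E_{r+1}(A,s^2A)\subseteq E^1_{r+1}(A,sA)$; this is exactly why one builds $B$ out of $s^2=(s)^2$. So $\psi\in E^1_{r+1}(A,sA)$, and since $(a,p)\in\Um(A\op P,sA)$ with $p=\sum c_ip_i$, $c_i\in sA$, Lemma~\ref{note} yields $\Phi\in E(A\op P)$ with $\Phi(a,p)=(1,0)$; composing $\Phi$ with the Step-1 transvections takes the original element to $(1,0)$.

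The main obstacle is keeping the powers of $s$ straight across the three steps: $B$ is cut out by $s^2$, Lemma~\ref{l95} supplies only \emph{some} exponent $t$, and Lemma~\ref{note} insists that the row-reducing automorphism sit in the small group $E^1_{r+1}(A,sA)$, not merely in $E_{r+1}(A)$ or $E_{r+1}(A,s^2A)$. The transvection manoeuvre above (pushing all $c_i$ into $s^2A$) and the inclusion $E_{r+1}(A,s^2A)\subseteq E^1_{r+1}(A,sA)$ (which forces the square, and which requires $r+1\ge3$) are the two points where this reconciliation actually happens; granting these, the rest is the formal gluing–descent argument supported by Lemmas~\ref{l95},~\ref{note} and Propositions~\ref{svs},~\ref{st}, exactly as in the Noetherian proof of~\cite{AK1}.
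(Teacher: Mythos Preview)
Your proof is correct and follows essentially the same route the paper indicates: the paper omits the argument and simply says ``using (\ref{svs}), (\ref{st}), (\ref{note}) and following the proof of \cite[Lemma~3.3]{AK1}'', and your write-up is precisely that argument --- identify $B$ with the double $A(s^2A)$, descend the elementary solution via (\ref{svs})/(\ref{st}) to land in $E_{r+1}(A,s^2A)\subseteq E^1_{r+1}(A,sA)$, and finish with Lemma~\ref{note} after the transvection normalisation. One cosmetic point: you use the symbol $p_i$ for both the projections $B\to A$ and the elements of $P$ from Lemma~\ref{l95}, and $\psi$ for both an element of $P^*$ and for $\sigma_2^{-1}\sigma_1$; renaming would avoid confusion.
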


The proof of the following result is same as of (\cite{AK1}, Theorem
3.4) using (\ref{ak1}, \ref{H}). 

\begin{proposition}\label{ak11}
Let $A$ be a reduced ring of dimension $d$ and $P$ a projective
$A$-module of rank $r \geq d$. Assume there exist a non-zerodivisor
$s\in A$ such that (\ref{l95}) holds. Assume $E_{r+1}(B)$ acts
transitively on $\Um_{r+1}(B)$, where $B=A[X]/(X^2-s^2X)$.  Then
$E(A\op P)$ acts transitively on $\Um(A\op P)$.
\end{proposition}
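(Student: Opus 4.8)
The strategy is to move an arbitrary unimodular element of $A\op P$ into $\Um(A\op P,s^2A)$ and then apply Lemma \ref{ak1}. Since transitivity of the $E(A\op P)$-action on $\Um(A\op P)$ amounts to being able to carry every $(a,p)\in\Um(A\op P)$ to the fixed element $(1,0)$, that is what I will establish. The degenerate case in which $s$ is a unit can be dealt with at once: there $s^2A=A$, so $\Um(A\op P,s^2A)=\Um(A\op P)$ and the claim is exactly the conclusion of Lemma \ref{ak1}, whose hypotheses ($A$ reduced, $s$ a non-zerodivisor, the conclusion of Lemma \ref{l95}, and $E_{r+1}(B)$ acting transitively on $\Um_{r+1}(B)$) all hold. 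So assume henceforth that $s$ is a non-unit.

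Next I would record the elementary fact that $\ol A:=A/s^2A$ satisfies $\dim\ol A\le d-1$. A non-zerodivisor lies in no minimal prime of $A$ — every minimal prime consists of zerodivisors, since after localizing at it the maximal ideal equals the nilradical — so every prime of $A$ containing $s$, equivalently containing $s^2$, properly contains a further prime; hence a chain of primes of $\ol A$ of length $k$ extends to a chain of length $k+1$ in $A$, and $\dim\ol A\le\dim A-1=d-1$. As $\ol P:=P/s^2P$ is projective over $\ol A$ of rank $r\ge d\ge\dim\ol A+1$, Heitmann's theorem \ref{H} gives that $E(\ol A\op\ol P)$ acts transitively on $\Um(\ol A\op\ol P)$.

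Finally, given $(a,p)\in\Um(A\op P)$, I would reduce modulo $s^2A$ to obtain $(\ol a,\ol p)\in\Um(\ol A\op\ol P)$, pick $\ol\Theta\in E(\ol A\op\ol P)$ with $\ol\Theta(\ol a,\ol p)=(1,0)$, and — using the surjectivity of $E(A\op P)\to E(\ol A\op\ol P)$ from Lemma \ref{lift} — lift $\ol\Theta$ to some $\Theta\in E(A\op P)$. Then $\Theta(a,p)=(a',p')$ with $a'\in 1+s^2A$ and $p'\in s^2P$, so $(a',p')\in\Um(A\op P,s^2A)$; Lemma \ref{ak1} then produces $\Phi\in E(A\op P)$ with $\Phi(a',p')=(1,0)$, and $\Phi\Theta\in E(A\op P)$ sends $(a,p)$ to $(1,0)$. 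The only mildly delicate points are the estimate $\dim\ol A\le d-1$, which in the non-Noetherian setting must be argued via minimal primes rather than Krull height, and the verification that $\Theta$ really deposits $(a,p)$ in $\Um(A\op P,s^2A)$ (that is, $p'\in s^2P$, not merely $a'\in 1+s^2A$); the genuine work is absorbed into Lemmas \ref{ak1} and \ref{H}, so the argument is short.
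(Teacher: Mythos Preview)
Your argument is correct and is precisely the approach the paper indicates: reduce modulo $s^2A$, use Heitmann's theorem (\ref{H}) on $A/s^2A$ (whose dimension drops since the non-zerodivisor $s$ avoids all minimal primes), lift via (\ref{lift}) to land in $\Um(A\op P,s^2A)$, and finish with (\ref{ak1}). The paper records only that the proof follows (\cite{AK1}, Theorem 3.4) using (\ref{ak1}) and (\ref{H}); you have supplied exactly those details.
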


\begin{remark}\label{rl}
By (\cite{Lam}, Exercise 2.34), any reduced ring $R$ can be embedded
in a reduced non-Noetherian ring $S$ such that $S$ equals the total
quotient ring $Q(S)$ of $S$ and $R$ is a retract of $S$. In
particular, if $P$ is a non-free projective $R$-module, then $P\ot_R
S$ is a non-free projective $S$-module. Hence, if $R$ is a reduced
non-Noetherian ring and $P$ a projective $R$-module, then we can not
say that $P_s$ is free, for some non-zerodivisor $s\in R$.
\end{remark}

\begin{definition}\label{dd}
Let $R\subset S$ be rings and $P$ a projective $S$-module. We say
that $P$ satisfies property $\Omega(R)$ if for any ideal $I$ of $R$ and
$\ol P=P/IP$, there exist a non-zerodivisor $\ol t\in R/I$ such
that $\ol P_{\ol t}$ is free. The property $\Omega(R)$ avoids situation
(\ref{rl}).
\end{definition}

The following result generalises (\ref{YA}). 

\begin{theorem}\label{K3}
Let $R$ be a ring of dimension $d$ and $A$ is one of $R[Y]$ or
$R[Y,Y^{-1}]$, where $Y$ is a variable over $R$. Let $P$ be a
projective $A$-module of rank $r\geq d+1$ which satisfies property
$\Omega(R)$. Then $E(A\op P)$ acts transitively on $\Um(A\op P)$.
\end{theorem}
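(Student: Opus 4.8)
The strategy is to reduce to the already-established machinery of Proposition \ref{ak11}, which handles a reduced ring $A$ of dimension $d$ carrying a non-zerodivisor $s$ with a Lindel-type free locus (\ref{l95}) and the hypothesis that $E_{r+1}(B)$ acts transitively on $\Um_{r+1}(B)$ for $B=A[X]/(X^2-s^2X)$. First I would use Lemma \ref{ak3.1} to pass to the reduced case: set $\ol A = A/\mathrm{nil}(A)$, $\ol P = P/\mathrm{nil}(A)P$. Since $\mathrm{nil}(R[Y]) = \mathrm{nil}(R)[Y]$ and similarly for $R[Y,Y^{-1}]$, the reduction $\ol A$ is again $\ol R[Y]$ or $\ol R[Y,Y^{-1}]$ with $\ol R = R/\mathrm{nil}(R)$ reduced of dimension $\leq d$, and $\ol P$ still satisfies $\Omega(\ol R)$ (an ideal of $\ol R$ pulls back to an ideal of $R$). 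So it suffices to prove the statement when $R$ (hence $A$) is reduced; then Lemma \ref{ak3.1} lifts transitivity of $E^1(\ol A\op\ol P,\ol A)$ on $\Um^1(\ol A\op\ol P,\ol A)$ back to $A$, and by the Bak–Basu–Rao theorem $E^1(A\op P)=E(A\op P)$, which gives the conclusion we want (note $\Um^1(A\op P,A) = \Um(A\op P)$).

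Next, with $R$ reduced, apply property $\Omega(R)$ with the ideal $I=0$: there is a non-zerodivisor $t\in R$ with $P_t$ free over $A_t$. Set $s = t \in A$; since $t$ is a non-zerodivisor in $R$ and $A$ is a (Laurent) polynomial extension, $s$ is a non-zerodivisor in $A$, and $P_s$ is free, so Lemma \ref{l95} applies — we get $p_1,\dots,p_r\in P$, $\varphi_1,\dots,\varphi_r\in P^*$ and an exponent realizing the diagonal condition; replacing $s$ by $s^t$ (still a non-zerodivisor, still in $R$) we may assume (\ref{l95}) holds on the nose. The remaining hypothesis of Proposition \ref{ak11} is that $E_{r+1}(B)$ acts transitively on $\Um_{r+1}(B)$ for $B = A[X]/(X^2 - s^2X)$. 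Here is the crux: $B = A[X]/(X^2-s^2X) = (R[X]/(X^2-s^2X))[Y]$ or $(R[X]/(X^2-s^2X))[Y,Y^{-1}]$, i.e. $B = R'[Y]$ or $R'[Y,Y^{-1}]$ where $R' := R[X]/(X^2-s^2X)$. Now $\dim R' = \dim R = d$ (the fibre ring $R[X]/(X^2-s^2X)$ is module-finite over $R$, in fact finite as a ring extension, so it does not raise Krull dimension — over each prime of $R$ the fibre is a finite-dimensional algebra over a field, of dimension $0$). Since $r\geq d+1$, we have $r+1\geq d+2 = \dim R' + 2$, so Yengui's theorem (\ref{YA}(i)) applies to $R'[Y]$ and Abedelfatah's theorem (\ref{YA}(ii)) applies to $R'[Y,Y^{-1}]$, giving exactly that $E_{r+1}(B)$ acts transitively on $\Um_{r+1}(B)$.

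With all hypotheses verified, Proposition \ref{ak11} yields that $E(A\op P)$ acts transitively on $\Um(A\op P,s^2A)$ — more precisely, \ref{ak1} takes any element of $\Um(A\op P,s^2A)$ to $(1,0)$. To upgrade from the relative statement to the absolute one I would argue as in the proof of (\cite{AK1}, Theorem 3.4): given an arbitrary $(a,p)\in\Um(A\op P)$, one first uses Heitmann's cancellation theorem \ref{H} (applicable since $\mathrm{rank}\,P = r \geq d+1$ and $\dim A = d+1$, so $r\geq \dim A$... here one must be a little careful — in fact $\dim A = d+1$ and $r\geq d+1 = \dim A$, which is the borderline Bass/Heitmann range, and this is precisely the regime Proposition \ref{ak11} is designed for) to bring $(a,p)$ into a form where $a\in 1+s^2A$ after an elementary change of coordinates localizing at $s$, then applies the relative result. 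The main obstacle I anticipate is not the dimension bookkeeping but making sure the passage to the reduced ring is clean: one needs $\mathrm{nil}(A) = \mathrm{nil}(R)[Y]$ (resp. its Laurent analogue) and that $\Omega(R)$ descends to $\Omega(R/\mathrm{nil}(R))$, and one needs $s$ to be chosen \emph{inside $R$} (not merely in $A$) so that the identity $A[X]/(X^2-s^2X) = R'[Y]$ with $R'$ a \emph{finite} $R$-algebra holds, which is what keeps the dimension from jumping and lets \ref{YA} apply. This is exactly the role of the hypothesis $\Omega(R)$ rather than a weaker "$\Omega(A)$".
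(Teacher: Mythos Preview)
Your setup through Lemma \ref{ak1} is correct and matches the paper: reduce to $R$ reduced via \ref{ak3.1}, pick a non-zerodivisor $s\in R$ with $P_s$ free via $\Omega(R)$ at $I=0$, identify $B=A[X]/(X^2-s^2X)$ with $R'[Y]$ or $R'[Y,Y^{-1}]$ where $R'=R[X]/(X^2-s^2X)$ has $\dim R'=d$, and invoke \ref{YA} so that \ref{ak1} handles $\Um(A\op P,s^2A)$.

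The gap is in your ``upgrade from relative to absolute'' step. You try to feed everything into Proposition \ref{ak11} (or the proof of \cite{AK1}, Theorem 3.4), but both of these require $\operatorname{rank} P \geq \dim A$, and you assert $\dim A = d+1$. For a non-Noetherian ring $R$ of dimension $d$ one only has $d+1 \leq \dim R[Y] \leq 2d+1$ in general, so $\dim A$ can exceed $d+1$ and the hypothesis of \ref{ak11} fails. The same obstruction blocks the internal argument of \ref{ak11}: it applies Heitmann's theorem \ref{H} to $A/s^2A=(R/s^2R)[Y]$, but $\dim (R/s^2R)[Y]$ can again be much larger than $d$, so the bound $r\geq d+1$ is not enough for \ref{H}.

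This is exactly why the paper does \emph{not} appeal to \ref{ak11}. Instead it runs an induction on $d=\dim R$ (not on $\dim A$): after reducing modulo $s^2A$, one has $\dim R/s^2R<d$, and the \emph{full} property $\Omega(R)$---its validity for \emph{every} ideal $I$ of $R$, not just $I=0$---guarantees that $\ol P=P/s^2P$ again satisfies $\Omega(R/s^2R)$, so the induction hypothesis (Theorem \ref{K3} itself in lower dimension) gives transitivity of $E(\ol A\op\ol P)$ on $\Um(\ol A\op\ol P)$; then \ref{lift} brings any $(a,p)$ into $\Um(A\op P,s^2A)$. Your proposal uses $\Omega(R)$ only at $I=0$, which is not enough; the iterated use of $\Omega$ down the chain of quotients is the missing idea.
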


\begin{proof}
By (\ref{ak3.1}), we may assume $R$ is reduced.  If $d=0$, then $P$ is
free by (\ref{zero}) and we can use (\ref{YA}). Hence assume $d\geq 1$
and use induction on $d$.  Since $P$ satisfies property $\Omega(R)$,
we can find a non-zerodivisor $s\in R$ such that $P_s$ is free and
(\ref{l95}) holds.  If $R'=R[X]/(X^2-s^2X)$, then $\dim R'=d$. Write
$B=A[X]/(X^2-s^2X)$. Then $B$ is one of $R'[Y]$ or $R'[Y,Y^{- 1}]$. By
(\ref{YA}), $E_{r+1}(B)$ acts transitively on $\Um_{r+1}(B)$. Applying
(\ref{ak1}), we get every element of $\Um(A\op P,s^2A)$ can be taken
to $(1,0)$ by some element of $E(A\op P)$. Therefore it is enough to
show that every element of $\Um(A\op P)$ can be taken to an element of
$\Um(A\op P,s^2A)$ by some element of $E(A\op P)$.

Let ``bar'' denote reduction modulo $s^2A$. Then $\dim R/s^2R<d$. By
assumption, $P/s^2P$ satisfies property $\Omega(R/s^2R)$. Hence by
induction on $d$, $E(\ol A\op \ol P)$ acts transitively on $\Um(\ol
A\op \ol P)$. Using (\ref{lift}), any element of
$\Um(A\op P)$ can be taken to an element of $\Um(A\op P,s^2A)$ by
$E(A\op P)$.  This completes the proof. $\hfill \gj$
\end{proof}

\section{Some Auxiliary results}

\begin{lemma}\label{L0}
Let $R$ be a ring of dimension $d$ such that dimension of the
polynomial ring $A=R[Y_1,\ldots,Y_n]$ is $d+n$. Then every stably free
$A$-module $P$ of rank $\geq d+1$ is free.
\end{lemma}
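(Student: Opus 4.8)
The plan is to induct on the number of variables $n$. The base case $n=0$ is Bass's cancellation theorem (\ref{H}): from $P\op R^t\iso R^r\op R^t$ with $r\geq d+1$, one cancels a free summand $R$ at a time, each cancellation being legitimate because at every stage one of the two modules involved is free of rank $\geq d+1$, hence cancellative by (\ref{H}); thus $P\iso R^r$.

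For the inductive step write $A=C[Y]$ with $C=R[Y_1,\dots,Y_{n-1}]$ and $Y=Y_n$. Since $\dim C\ge d+n-1$ and $\dim C\le\dim C[Y]-1=d+n-1$, we get $\dim C=d+n-1$, so $C$ again satisfies the hypothesis of the lemma, with $n-1$ variables. It is enough to show that $P$ is extended from $R$: if $P\iso P_0\ot_R A$ then $P_0=P/(Y_1,\dots,Y_n)P$ is stably free over $R$ of rank $r\geq d+1$, hence free by the case $n=0$, whence $P\iso A^r$. To prove $P$ is extended from $R$ I would invoke Quillen's patching theorem, reducing to the statement that $P\ot_R R_\mm$ is extended from $R_\mm$ for every maximal ideal $\mm$ of $R$. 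This reduces the whole problem to the case of a \emph{local} base $L$ of dimension $\le d$; and over a local ring, ``extended from $L$'' is the same as ``free'' for $P$ (because $P/(Y_i)P$ is stably free over the local ring $L$, hence free). So the crux becomes: a stably free module over $L[Y_1,\dots,Y_n]$, with $L$ local of dimension $\le d$, of rank $\ge d+1$, is free.

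For this local statement one would induct on $n$ once more, peeling off $Y_n$ by the affine Horrocks theorem in the form (\ref{QS1}): it suffices to exhibit $f\in L[Y_1,\dots,Y_{n-1}][Y_n]$, monic in $Y_n$, with $P_f$ free. Modulo the maximal ideal of $L$ the module becomes a stably free module over a polynomial ring over a field, hence free by Quillen--Suslin (\ref{zero1}); combining this with the stably free structure (the associated unimodular row can be trivialised after inverting a single coordinate, which one wants to arrange to be monic) should produce $f$, after which the inductive hypothesis in $n$ disposes of the remaining variables.

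The hard part is precisely the construction of such an $f$ -- equivalently, the Bass--Quillen statement for stably free modules over a polynomial ring over a local ring. The naive estimates are insufficient: Quillen patching one variable at a time, or Plumstead/Yengui transitivity (\ref{YA}), only control unimodular rows whose length exceeds the dimension of the intermediate ring, and those dimensions are of size $d+n-1$ rather than $d$, so by themselves they give freeness only in rank $\ge d+n$. The reason the statement nonetheless holds in rank $\ge d+1$, and the role of the hypothesis $\dim A=d+n$, is that restricting to \emph{stably free} modules lets Horrocks' theorem reduce matters to the existence of a single monic localisation on which the module becomes free, and this existence is insensitive to the dimension of the base once the relevant dimensions add up correctly -- which the hypothesis guarantees. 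Everything outside this point is routine bookkeeping with the identity $\dim R[Y_1,\dots,Y_k]=d+k$.
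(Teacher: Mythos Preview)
Your proof has a genuine gap, which you yourself identify: after reducing (via Quillen patching) to a local base $L$, you need a monic $f\in L[Y_1,\dots,Y_{n-1}][Y_n]$ with $P_f$ free, and you do not produce one. The paragraph beginning ``The hard part is precisely the construction of such an $f$'' is not an argument; it is a description of what remains to be done, followed by an assertion that it works. Moreover, the detour through Quillen patching is a red herring: nothing is gained by passing to local $R_{\mathfrak m}$, and it is not even clear that the hypothesis $\dim R_{\mathfrak m}[Y_1,\dots,Y_n]=\dim R_{\mathfrak m}+n$ is inherited from the corresponding hypothesis on $R$ in the non-Noetherian setting.

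The paper's proof bypasses all of this with a single clean move. Let $S\subset R[Y_n]$ be the multiplicative set of monic polynomials. The hypothesis $\dim A=d+n$ forces $\dim R[Y_n]=d+1$, and then $\dim S^{-1}R[Y_n]=d$; moreover $S^{-1}A=(S^{-1}R[Y_n])[Y_1,\dots,Y_{n-1}]$ has dimension $d+(n-1)$, so the inductive hypothesis (with base ring $S^{-1}R[Y_n]$ and $n-1$ variables) applies directly to $S^{-1}P$ and shows it is free. Since $P$ is finitely generated, $P_f$ is free for a single monic $f\in S$, and (\ref{QS1}) finishes. The point you were missing is that inverting \emph{all} monics in one variable drops the base dimension back to $d$ without ever localising $R$ itself; this is exactly where the hypothesis $\dim R[Y_1,\dots,Y_n]=d+n$ does its work.
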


\begin{proof}
The case $n=0$ is due to Heitmann (\ref{H}). Assume $n>0$ and use
induction on $n$. Let $S$ be the set of all monic polynomials in
$R[Y_n]$. Then $\dim R[Y_n]_S=d$ and $\dim
R[Y_n]_S[Y_1,\ldots,Y_{n-1}]=d+n-1$. Hence by induction on $n$,
$S^{-1}P$ is free. By (\ref{QS1}), $P$ is free.  $\hfill \gj$
\end{proof}

\begin{proposition}\label{L1}
Let $R$ be a ring of dimension $d$ such that dimension of the polynomial ring
$R[Y_1,\ldots,Y_n]$ is $d+n$. Let $A=R[Y_1,\ldots,Y_n,(f_1\ldots
  f_m)^{-1}]$ with $m\leq n$ and $f_i\in R[Y_i]$ a monic polynomial
for all $i$. Then every stably free $A$-module $P$ of
rank $r\geq d+1$ is free.
\end{proposition}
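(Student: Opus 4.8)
The plan is to prove the statement by induction on the number $m$ of inverted elements, running in parallel with the proof of Proposition \ref{zero2}; the base case $m=0$ is Lemma \ref{L0}. So suppose $m\geq 1$, put $D=R[Y_1,\ldots,Y_{m-1},Y_{m+1},\ldots,Y_n,(f_1\cdots f_{m-1})^{-1}]$, and write $C:=R[Y_1,\ldots,Y_n,(f_1\cdots f_{m-1})^{-1}]=D[Y_m]$ and $A=C_{f_m}$, where $f_m\in R[Y_m]\subseteq D[Y_m]$ is monic in $Y_m$. The idea is to descend $P$ to a finitely generated projective module over the \emph{polynomial} ring $D[Y_m]$ and then invoke Quillen--Suslin (Theorem \ref{QS1}).

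First I would feed the inductive hypothesis through two localizations of $R[Y_m]$. Let $S_1$ be the set of monic polynomials of $R[Y_m]$ and let $S=1+f_mR[Y_m]$; set $B_1=R[Y_m]_{S_1}$ and $B=R[Y_m]_{f_mS}$. Since $\dim R[Y_1,\ldots,Y_n]=d+n$ forces $\dim R[Y_1,\ldots,Y_k]=d+k$ for all $k\leq n$, one checks that $\dim B_1=\dim B=d$ and that $S_1^{-1}A=B_1[Y_i:i\neq m,(f_1\cdots f_{m-1})^{-1}]$ and $S^{-1}A=B[Y_i:i\neq m,(f_1\cdots f_{m-1})^{-1}]$ are rings of the shape to which the inductive hypothesis applies: base ring $B_1$, resp.\ $B$, of dimension $d$, the remaining $n-1$ variables, and the monic polynomials $f_1,\ldots,f_{m-1}$. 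As $S_1^{-1}P$ and $S^{-1}P$ are stably free of rank $r\geq d+1$, they are free; hence, $P$ being finitely generated, there are a monic $h_0\in S_1$ with $P_{h_0}$ free over $A_{h_0}=C_{f_mh_0}$ and a $g\in S$ with $P_g$ free over $A_g=C_{f_mg}$. Because $g\in 1+f_mR[Y_m]$, the elements $f_m$ and $g$ are comaximal in $R[Y_m]$, hence in $C$.

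Now I would patch over the Zariski cover $\Spec C=D(f_m)\cup D(g)$: gluing the projective $C_{f_m}$-module $P$ and the free $C_g$-module $C_g^r$ along an isomorphism $P_g\cong C_{f_mg}^r$ yields, by Milnor patching, a finitely generated projective $C$-module $Q$ with $Q_{f_m}\cong P$. Since $Q_{f_mh_0}\cong(Q_{f_m})_{h_0}\cong P_{h_0}$ is free over $C_{f_mh_0}$ and $f_mh_0$ is a monic polynomial in $Y_m$ over $D$, Theorem \ref{QS1} applied to the projective $D[Y_m]$-module $Q$ shows that $Q$ is free. Therefore $P\cong Q_{f_m}$ is a localization of a free module, hence free, which completes the induction.

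The step I expect to be the main obstacle is the dimension bookkeeping needed to invoke the inductive hypothesis: one must show $\dim B_1=\dim B=d$ and, crucially, that adjoining the remaining $n-1$ variables keeps the dimension at $d+(n-1)$, i.e.\ that inverting the monic polynomials of $R[Y_m]$ (resp.\ the set $1+f_mR[Y_m]$) inside $R[Y_1,\ldots,Y_n]$ lowers the Krull dimension by exactly one. The inequalities ``$\geq$'' are immediate; the reverse inequalities are the real content, and this is precisely where the hypothesis $\dim R[Y_1,\ldots,Y_n]=d+n$ gets used — over a Noetherian or Jacobson base it is routine, but in the general non-Noetherian situation it must be argued by analyzing how these multiplicative sets meet the maximal chains of primes. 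A minor but essential point is that one genuinely needs the two distinct localizations: $S_1$ to supply a \emph{monic} $h_0$ with $P_{h_0}$ free (so that Theorem \ref{QS1} applies to $Q$), and $S$ to supply a $g$ \emph{comaximal with $f_m$} with $P_g$ free (so that the gluing is over an honest Zariski cover), since a monic polynomial need not be comaximal with $f_m$.
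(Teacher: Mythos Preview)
Your argument is correct, but it takes a different route from the paper's at the key descent step. Both proofs localize at $S=1+f_mR[Y_m]$, apply the induction hypothesis to get $S^{-1}P$ free, and patch $P$ with $C_g^r$ to obtain a projective $C$-module $Q$ with $Q_{f_m}\cong P$. From here the paper does \emph{not} introduce your second localization $S_1$. Instead it exploits the stably-free hypothesis directly: since $P\oplus A^t$ is free for some $t$, the module $(Q\oplus C^t)_{f_m}$ is free, and because $f_m$ itself is already monic in $Y_m$ over $D$, Theorem~\ref{QS1} gives $Q\oplus C^t$ free. Thus $Q$ is stably free over $C$, and a \emph{second} application of the induction hypothesis (now to $Q$ over $C$, which has only $m-1$ inverted elements) finishes the proof.

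Your version trades this ``stably-free trick plus second induction'' for the extra localization at monics $S_1$, which furnishes a monic $h_0$ with $Q_{f_mh_0}$ free so that Theorem~\ref{QS1} gives $Q$ free in one stroke; this is essentially the mechanism from the proof of Lemma~\ref{L0} spliced into the inductive step. The paper's approach is a little more economical (one localization, one dimension check), while yours makes the appeal to Theorem~\ref{QS1} more direct and shows that the stably-free hypothesis is really only needed to feed the induction, not for the descent itself. As for the dimension bookkeeping you flag as the main obstacle: the paper also simply asserts $\dim R[Y_m]_{f_mS}=d$ (from $\dim R[Y_m]=d+1$) and does not explicitly verify that the polynomial extension in the remaining $n-1$ variables again has the expected dimension, so you are not missing anything the paper supplies; your additional check for $B_1=R[Y_m]_{S_1}$ goes through by the standard observation that two comparable primes of $R[Y_m]$ with the same contraction to $R$ force the larger to contain a monic.
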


\begin{proof}
The case $m=0$ follows from (\ref{L0}). Assume $m>0$ and use induction
on $m$. Let $C=R[Y_1,\ldots,Y_n,(f_1\ldots f_{m-1})^{-1}]$. If
$S=1+f_mR[Y_m]$, then $\dim R[Y_m]_{f_mS}=d$ (since $\dim R[Y_m]=d+1$)
and
$S^{-1}A=R[Y_m]_{f_mS}[Y_1,\ldots,Y_{m-1},Y_{m+1},\ldots,Y_n,(f_1\ldots
  f_{m-1})^{-1}]$. By induction on $m$, $S^{-1}P$ is free. Choose
$g\in S$ such that $P_g$ is free. Patching projective modules $P$ and
$C_g^r$ over $C_{f_mg}$, we get a projective $C$-module $Q$ such that
$Q_{f_m}=P$. Since $P$ is stably free, $(Q\op C^t)_{f_m}$ is free for
some $t$. By (\ref{QS1}), $Q\op C^t$ is free, i.e. $Q$ is stably
free. By induction on $m$, $Q$ and hence $P$ is free. $\hfill \gj$
\end{proof}

It is natural to ask if all projective $A$-modules of rank $\geq d+1$ in
(\ref{L1}) are cancellative. We give a partial answer.

\begin{theorem}\label{K4}
Let $R$ be an integral domain of dimension $d$ such that $\dim
R[Y]=d+1$.  Let $A=R[Y,f^{-1}]$ with $f\in R[Y]$ and $P$ a projective
$A$-module of rank $r\geq max\{2,d+1\}$. Then $E(A\op P)$ acts
  transitively on $\Um(A\op P)$.
\end{theorem}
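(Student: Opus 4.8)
The plan is to reduce to the situation of Proposition \ref{ak11}, which already packages the heavy lifting. First I would handle the case $\dim R = 0$: then $R$ is a field (being a domain), $A = R[Y,f^{-1}]$ is a PID (or a field, if $f$ happens to be a unit), so every projective $A$-module of rank $\geq 2$ is free, and the result follows from Theorem \ref{YA} since $\dim A \leq 1 \leq d+2$ with the appropriate count — actually more directly, free modules over a PID with a unimodular element, standard. Having disposed of $d=0$, assume $d \geq 1$ and $r \geq d+1$. By Lemma \ref{ak3.1} we may replace $A$ by its reduction modulo the nilradical; since $R$ is a domain and $A = R[Y,f^{-1}]$ is then already reduced this costs nothing, but it is the clean way to invoke the machinery. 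The key point is to produce a non-zerodivisor $s \in A$ such that Lemma \ref{l95} applies, i.e. $P_s$ is free. Take $S$ to be the multiplicative set of nonzero elements of $R$ (legitimate since $R$ is a domain), so $S^{-1}R = K$ is the fraction field and $S^{-1}A = K[Y,f^{-1}]$, a principal ideal domain; hence $S^{-1}P$ is free. Since $P$ is finitely presented, $P_s$ is free for some nonzero $s \in R \subseteq A$, and $s$ is a non-zerodivisor in $A$ because $R$ is a domain and $A$ is torsion-free over $R$ (it is a localization of a polynomial ring).

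Now I would apply Proposition \ref{ak11} with this $s$. Its hypotheses are: $A$ reduced (yes), $\dim A = ?$, $P$ projective of rank $r \geq \dim A$, and $E_{r+1}(B)$ acting transitively on $\Um_{r+1}(B)$ for $B = A[X]/(X^2 - s^2X)$. The dimension bookkeeping is where one must be careful: $A = R[Y,f^{-1}]$ has $\dim A = d+1$ under the standing hypothesis $\dim R[Y] = d+1$, and $r \geq d+1 = \dim A$, so the rank condition in \ref{ak11} is met. For the condition on $B$: note $B = A[X]/(X^2-s^2X) = R'[Y,f^{-1}]$ where $R' = R[X]/(X^2-s^2X)$, and $\dim R' = d$ (the ring $R'$ is the classical construction whose spectrum is two copies of $\Spec R$ glued along $\Spec R/s^2R$, so its dimension is $\dim R$, exactly as used in the proof of Theorem \ref{K3}). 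Hence $\dim B \leq \dim R'[Y] = d+1$, so $r+1 \geq d+2 \geq \dim B + 1$, and Theorem \ref{YA}(i) applied over the base ring $R'[f^{-1}]$ — or rather, since $B$ need not literally be of the form $C[Y]$ but is $R'[Y]_f$, one argues as in Theorem \ref{K3}: $E_{r+1}(B)$ acts transitively on $\Um_{r+1}(B)$ because $r+1 \geq \dim B + 2$. Actually $r+1 \geq d+2$ and $\dim B = d+1$ gives $r+1 \geq \dim B + 1$; to get the "$+2$" needed for \ref{YA} one uses that $B$ is a localization of $R'[Y]$ with $\dim R' = d$, so $B$ is covered by \ref{YA}(i) with base ring of dimension $d$ once $r+1 \geq d+2$, i.e. $r \geq d+1$. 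This is exactly the borderline case and must be checked against the precise hypothesis "$n \geq d+2$" in \ref{YA}.

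The main obstacle I anticipate is precisely this dimension-count at the boundary: Proposition \ref{ak11} demands $r \geq \dim A$ and transitivity of $E_{r+1}$ on $\Um_{r+1}(B)$, and with $r = d+1$ exactly these are tight. One needs $\dim A = d+1$ (not more), which is why the hypothesis $\dim R[Y] = d+1$ is imposed — without it $\dim R[Y]$ could be as large as $2d+1$ and the argument collapses. Similarly one needs $B = A[X]/(X^2-s^2X)$ to be handled by Yengui's theorem, which requires realizing $B$ as (a localization of) a polynomial ring over a $d$-dimensional ring $R'$ and checking $r+1 \geq d+2$. Once these dimension equalities are nailed down, Proposition \ref{ak11} directly yields that $E(A \oplus P)$ acts transitively on $\Um(A \oplus P)$, completing the proof; in particular $P$ is cancellative. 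A secondary point to verify is that $s$ chosen in $R$ is genuinely a non-zerodivisor in $A$ and that Lemma \ref{l95}'s conclusion is available — both follow from $R$ being a domain, but should be stated explicitly.
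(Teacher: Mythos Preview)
Your overall strategy is the same as the paper's: choose a nonzero $s\in R$ trivializing $P_s$, and reduce (via Lemma~\ref{ak1}/Proposition~\ref{ak11} together with Heitmann's theorem modulo $s^2$) to proving that $E_{r+1}(B)$ acts transitively on $\Um_{r+1}(B)$ for $B=R'[Y,f^{-1}]$ with $R'=R[X]/(X^2-s^2X)$. The reduction is fine, and invoking Proposition~\ref{ak11} in place of the paper's explicit ``reduce mod $s^2$ and apply \ref{H}'' is a harmless repackaging.

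The genuine gap is the step you flag but do not resolve: you cannot get transitivity of $E_{r+1}(B)$ on $\Um_{r+1}(B)$ from Theorem~\ref{YA}. That theorem applies only to $R'[Y]$ or $R'[Y,Y^{-1}]$, not to an arbitrary localization $R'[Y]_f$; your suggestion to ``argue as in Theorem~\ref{K3}'' does not help, since in \ref{K3} the ring $B$ is literally $R'[Y]$ or $R'[Y,Y^{-1}]$ and \ref{YA} applies directly. Nor does Heitmann's \ref{H} suffice here, because $\dim B$ can be $d+1$ and you only have $r+1\geq d+2$, one short of the required $\dim B+2$. The paper supplies exactly this missing argument: with $C=R'[Y]$, one first localizes further to $C_{f(1+fR'[Y])}$, which has dimension $d$ (using $\dim R'[Y]=d+1$, a consequence of the hypothesis $\dim R[Y]=d+1$ and the integrality of $R'/R$), applies Heitmann there to complete the given unimodular row, then uses the splitting lemma~\ref{split} to factor the resulting elementary matrix over $C_{fg}$ as a product coming from $C_g$ and $C_f$, patches to obtain a unimodular row over $C=R'[Y]$, and finally invokes Yengui's \ref{YA}(i) on $C$. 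This fiber-product/patching step is the heart of the proof and is precisely what your plan is missing.
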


\begin{proof}
If $d=0$, then $P$ is free and we are done by (\ref{elem}). Assume
$d\geq 1$.  Choose $0\not=s\in R$ such that (\ref{l95}) holds. Write
$R'=R[X]/(X^2-s^2X)$ and $B=R'[Y,f^{-1}]$. Assume $E_{r+1}(B)$ acts
transitively on $\Um_{r+1}(B)$. By (\ref{ak1}), any $(a,p)\in \Um(A\op P,s^2A)$
can be taken to $(1,0)$ by some element in $E(A\op P)$. Let
``bar'' denote reduction modulo $s^2A$. Then $\dim \ol A=d$ and rank
$\ol P\geq d+1$. Applying (\ref{H}), we get $E(\ol A\op \ol P)$ acts
transitively on $\Um(\ol A\op \ol P)$. Using (\ref{lift}), every $V\in
\Um(A\op P)$ can be taken to $W\in \Um(A\op P,s^2A)$ by some element
of $E(A\op P)$. Therefore, it is enough to show that $E_{r+1}(B)$ acts
transitively on $\Um_{r+1}(B)$.

Let $v\in \Um_{r+1}(B)$. If $C=R'[Y]$, then $B=C_f$. 
Since $R'$ is an integral extension of $R$,
$\dim R[Y]=d+1=\dim R'[Y]$. Hence $\dim C_{f(1+fR'[Y])}=d$. Applying
(\ref{H}), we get $\sigma\in E_{r+1}(C_{f(1+fR'[Y])})$ such that
$\sigma(v)=(1,\ldots,0)$. We can find $g\in 1+fR'[Y]$ and $\wt \sigma
\in E_{r+1}(C_{fg})$ such that $\wt \sigma(v)=(1,0\ldots,0)$.

By (\ref{split}), $\wt \sigma$ has a splitting $\wt
\sigma=(\ga)_{f}(\gb)_g$, where $\ga\in E_{r+1}(C_g)$ and $\gb\in
E_{r+1}(C_{f})$. We have two unimodular elements $\gb(v) \in
\Um_{r+1}(C_{f})$ and $\ga^{-1}((1,0,\ldots,0))\in \Um_{r+1}(C_g)$
whose images in $C_{fg}$ are same. Hence, patching $\gb(v)$ and
$\ga^{-1}((1,0,\ldots,0))$, we get $w\in \Um_{r+1}(C)$ whose
image in $C_{f}$ is $\gb(v)$.  By Yengui (\ref{YA}), $E_{r+1}(C)$ acts
transitively on $\Um_{r+1}(C)$. Hence, we can find $\phi\in
E_{r+1}(C)$ such that $\phi(w)=(1,0,\ldots,0)$. If $\Phi_1$ is the
image of $\phi$ in $C_{f}$, then $\Phi_1(\ga(v))=(1,0,\ldots,0)$ and
$\Phi_1\ga \in E_{r+1}(B)$. This completes the proof. $\hfill \gj$
\end{proof}

\begin{remark}
$(1)$ By a result of Seidenberg (\cite{Se}, Theorem 4), if $R$ is a Pr\"ufer
domain, then $\dim R[Y_1,\ldots,Y_n]=\dim R+n$. Hence (\ref{L1}, \ref{K4}) holds
for a Pr\"ufer domain $R$ and generalizes (\ref{K3}).

$(2)$ Lequain-Simis have shown \cite{LS} that if $R$ is a Pr\"ufer
domain, then projective modules over $R[Y_1,\ldots,Y_n]$ are extended
from $R$. In particular, if $R$ is a valuation domain (local Pr\"ufer
domain), then projective $R[Y_1,\ldots,Y_n]$-modules are free. It is
natural to ask if projective modules over $R[Y_1,\ldots,Y_n,(f_1\ldots
  f_m)^{-1}]$ are free, where $R$ is a valuation domain, $m\leq n$ and
$f_i\in R[Y_i]$. If each $f_i$ is a monic polynomial, then (\ref{L1})
gives a partial answer.
\end{remark}

\begin{proposition}\label{LS1}
Let $R$ be a valuation domain of dimension $d$ and
$A=R[X,Y_1,\ldots,Y_n,f^{-1}]$ with $f\in R[X]$.  Then
every stably free $A$-module $P$ of rank $\geq d+1$ is free.
\end{proposition}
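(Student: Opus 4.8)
The plan is to reduce the statement to the already-proved Proposition \ref{L1} by a localization-and-patching argument in the variable $X$, exactly parallel to the proof of Proposition \ref{L1} itself but with the roles of the monic-localization and the general $f$ handled at the outset. First I would dispose of the case $d=0$ (or $f$ a unit) trivially: when $\dim R=0$, $A$ is a localization of a polynomial ring over a $0$-dimensional ring, so $P$ is free by Proposition \ref{zero}; when $f$ is a unit, $A$ is just the polynomial ring $R[X,Y_1,\ldots,Y_n]$, which over a valuation domain has all projective modules free by the Lequain--Simis theorem quoted in the Remark. So assume $d\geq 1$ and $f$ a non-unit.

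The core of the argument is to introduce the monic-denominator trick in the single variable $X$. Write $C=R[X,Y_1,\ldots,Y_n]$ so that $A=C_f$, and set $S=1+fR[X]$. The key dimension facts are that $R$ is a valuation domain, hence by Seidenberg's theorem $\dim R[X]=d+1$ and $\dim C=d+n+1$; and that $\dim R[X]_{fS}=d$ since inverting $f$ together with the elements $1+fh$ kills one dimension (this is the standard fact that $\dim B_{f(1+fB)}<\dim B$ when $f$ is a non-unit non-zerodivisor). Consequently $S^{-1}A=R[X]_{fS}[Y_1,\ldots,Y_n,f^{-1}]$ is a localization of a polynomial ring over the $d$-dimensional valuation(-like) ring $R[X]_{fS}$. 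Now here is the delicate point I expect to be the main obstacle: $R[X]_{fS}$ need \emph{not} be a valuation domain, so I cannot directly invoke Lequain--Simis to conclude $S^{-1}P$ is free. What I \emph{can} use is that $S^{-1}P$ is stably free (being a localization of the stably free $P$) of rank $\geq d+1$, and that $f$ becomes a monic polynomial after inverting $S$ in the sense needed for Proposition \ref{L1} — but this requires care, since $f\in R[X]$ is not monic in general. The right move is probably not to go through $S^{-1}A$ at all, but rather to localize at a single monic denominator: let $T$ be the set of monic polynomials in $R[X]$, so $\dim R[X]_T=d$ and $R[X]_T[Y_1,\ldots,Y_n,f^{-1}]$ has dimension $d+n$ (using Seidenberg again, now over $R[X]_T$ which is still a Pr\"ufer—in fact Bézout—domain as a localization of the Pr\"ufer domain $R[X]$... no: $R[X]$ over a valuation domain is Pr\"ufer by Lequain--Simis/ Seidenberg's Prüfer result, so $R[X]_T$ is Prüfer of dimension $d$). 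Then Proposition \ref{L1} applied over the Prüfer domain $R[X]_T$ gives that $T^{-1}P$ is free.

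With $T^{-1}P$ free I would then descend using the Quillen--Suslin monic-localization theorem, Theorem \ref{QS1}, but applied in the variable $X$ over the base ring $R[Y_1,\ldots,Y_n,f^{-1}]$ — wait, $f\in R[X]$ mixes the variables, so I must instead argue as in the proof of Proposition \ref{L1}: pick a single monic $h\in R[X]$ with $P_h$ free (possible since $P$ is finitely generated and $T^{-1}P$ is free), note $f$ and $h$ are comaximal in... no, they need not be. The clean path, mirroring Proposition \ref{L1}: set $D=R[X,Y_1,\ldots,Y_n]$, $A=D_f$; find $h\in T$ (monic in $X$) with $P_h$ free; form the patching diagram with $D_f=A$, $D_h$, $D_{fh}$, and glue $P$ over $D_f$ with $D_h^r$ over $D_h$ to get a projective $D$-module $Q$ with $Q_f\cong P$; since $P$ is stably free, $(Q\oplus D^t)_f$ is free, and since $h$ is monic in $X$, by Theorem \ref{QS1} (over the base $R[Y_1,\ldots,Y_n]$, variable $X$) applied to... hmm, $f^{-1}$ still intrudes.

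Actually the honest obstacle is precisely this entanglement of $f$ (a polynomial only in $X$) with the need to clear denominators, and the cleanest resolution is: reduce first to the case where $f$ itself is monic in $X$ via a change of variables. Since $R$ is a domain (indeed a valuation domain), by a standard Nagata-type argument one can, after a substitution $Y_i\mapsto Y_i+X^{N}$ or rather $X\mapsto X$, $R\hookrightarrow R[1/a]$ for a suitable leading-type coefficient... this is subtle over non-Noetherian valuation domains. I will instead follow the template of the proof of Proposition \ref{L1} literally: write $C=R[X,Y_1,\dots,Y_n,(f)^{-1}]$ playing the role of ``$A$'' in \ref{L1} with $m=1$, $n$ replaced by $n+1$, $Y_1$ replaced by $X$, and $f_1=f$ — but \ref{L1} demands $f_1$ \emph{monic}. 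So the genuinely new content needed is to remove the monicity hypothesis on $f$ when the base is a \emph{valuation} domain, and the tool for that is Lequain--Simis: over $R[X]_T$ (Prüfer, dimension $d$), projective modules over the polynomial ring in $Y_1,\dots,Y_n$ are extended from $R[X]_T$, hence — after further localizing at $f$ — one shows $(R[X]_T)[Y_1,\dots,Y_n,f^{-1}]$ still has all projectives of the right rank free via \ref{L1} (now $f$ is a unit times a monic over $R[X]_T$? no).

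I will structure the final proof as: (1) reduce to $d\geq1$, $f$ non-unit; (2) let $T$ be monic polynomials in $R[X]$, so $R[X]_T$ is a Prüfer domain of dimension $d$ with $\dim R[X]_T[Y_1,\dots,Y_n]=d+n$, whence by Proposition \ref{L1} (applied with $m=1$ and the non-zerodivisor $f$, which I must first arrange to be handled — invoke instead the Remark's consequence that \ref{L1} holds over Prüfer domains, and note that over the Bézout domain $R[X]_T$ the ideal $(f)$ is principal so $R[X]_T[Y_1,\dots,Y_n,f^{-1}]$ is a localization to which \ref{L1}'s conclusion applies) $T^{-1}P$ is stably free of rank $\geq d+1$ hence free; (3) patch over the $X$-line: choose monic $h\in R[X]$ with $P_{h}$ free, glue to get projective $Q$ over $R[X,Y_1,\dots,Y_n,f^{-1}]$... and finally (4) apply Theorem \ref{QS1} in the variable $X$. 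The main obstacle is step (2)'s legitimacy — making Proposition \ref{L1} or Lequain--Simis apply despite $f$ not being monic — and I expect to resolve it by exploiting that $R[X]_T$ is Bézout (so $f$ generates a principal, hence ``monic up to unit'', ideal) together with the extendedness of projectives over polynomial rings on Prüfer bases.

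\medskip
\emph{Sketch of the intended proof.} If $d=0$ or $f$ is a unit, then $A$ is a localization of $R[X,Y_1,\ldots,Y_n]$, and since $R$ is a valuation domain this polynomial ring has only free projectives by Lequain--Simis; so $P$ is free. Assume $d\geq 1$ and $f$ a non-unit. Let $T$ be the multiplicative set of monic polynomials in $R[X]$. Then $R[X]_T$ is a Bézout domain (a localization of the Prüfer domain $R[X]$) of dimension $d$, and by Seidenberg $\dim R[X]_T[Y_1,\ldots,Y_n]=d+n$. The image of $f$ in $R[X]_T$ generates a principal ideal, so $R[X]_T[Y_1,\ldots,Y_n,f^{-1}]$ is covered by Proposition \ref{L1} (valid over Prüfer domains by the Remark): the stably free module $T^{-1}P$ of rank $\geq d+1$ is free. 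As $P$ is finitely generated, there is $h\in T$ with $P_h$ free. Patching the projective module $P$ over $A=R[X,Y_1,\ldots,Y_n,f^{-1}]$ with the free module over $R[X,Y_1,\ldots,Y_n,f^{-1}]_h$ along their common localization, one obtains a projective module $Q$ over $R[X,Y_1,\ldots,Y_n,f^{-1}]$, finitely generated and free after inverting the monic $h$; but $h$ monic in $X$ lets Theorem \ref{QS1} apply over the base ring $R[Y_1,\ldots,Y_n,f^{-1}][X]$ — here one uses that stable freeness of $P$ makes $Q\oplus (\text{free})$ free after inverting $h$ — to conclude $Q$, hence $P$, is free.
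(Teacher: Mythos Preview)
Your route through the monic localization $T$ and Proposition~\ref{L1} has genuine gaps, and the paper's argument is both simpler and avoids them. Two concrete problems: first, your claim that $R[X]$ is a Pr\"ufer domain when $R$ is a valuation domain is false (take $R=\BZ_{(p)}$; the ideal $(p,X)$ is not invertible); even if $R\langle X\rangle=R[X]_T$ turns out to be Pr\"ufer by some other argument, Proposition~\ref{L1} still requires the $f_i$ to be \emph{monic}, and ``$f$ generates a principal ideal'' does not supply that. Second, your descent step is incoherent as written: you patch $P$ over $A=R[X,Y_1,\ldots,Y_n,f^{-1}]$ with a free module over $A_h$ to produce $Q$ over~$A$ again (there is nothing to descend to), and you then try to invoke Theorem~\ref{QS1} in the variable $X$ over the base $R[Y_1,\ldots,Y_n,f^{-1}]$ --- but $f\in R[X]$, so $f^{-1}$ cannot sit in the coefficient ring while $X$ is the polynomial variable.

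The paper's proof sidesteps all of this by choosing the multiplicative set $S=1+fR[X]$ rather than the monic polynomials. The point is that after inverting $S$ (together with $f$, which is already inverted in $A$) one has $S^{-1}A=R[X]_{fS}[Y_1,\ldots,Y_n]$, a \emph{pure polynomial ring} with no residual localization at $f$; since $\dim R[X]=d+1$ (Seidenberg) one gets $\dim R[X]_{fS}=d$ and $\dim R[X]_{fS}[Y_1,\ldots,Y_n]=d+n$, so Lemma~\ref{L0} (not \ref{L1}) applies directly to give $S^{-1}P$ free. Pick $g\in S$ with $P_g$ free; now $f$ and $g$ are comaximal in $R[X]$, so patching $P$ over $C_f$ with $(C_g)^r$ over $C_g$ (where $C=R[X,Y_1,\ldots,Y_n]$) yields a projective $C$-module $Q$ with $Q_f\cong P$. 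Finally, Lequain--Simis gives that \emph{every} projective $C$-module is free, so $Q$ and hence $P$ is free --- no Quillen--Suslin descent, no monicity of $f$, and no Pr\"ufer property of $R[X]$ are needed.
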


\begin{proof}
If $d=0$, then $P$ is free, by (\ref{zero}). Assume $d\geq 1$.  Let
$C=R[X,Y_1,\ldots,Y_n]$ and $S=1+fR[X]$. Since $\dim R[X]=d+1$ by
Seidenberg \cite{Se}, $ \dim R[X]_{fS}=d$ and $\dim
R[X]_{fS}[Y_1,\ldots,Y_n]=d+n$. By (\ref{L0}), $S^{-1}P$ being stably
free, is free. Choose $g\in S$ such that $P_g$ is free. Patching
projective modules $P$ and $(C_g)^r$ over $C_{fg}$, we get a
projective $C$-module $Q$ such that $P\iso Q_f$. By Lequain-Simis
\cite{LS}, every projective $C$-module is free. Therefore $Q$ and
hence $P$ is free. $\hfill \gj$
\end{proof}

{\small

}
\end{document}